\documentclass[11pt]{amsart}
\usepackage{setspace}
\usepackage[top=2.0cm,bottom=2.0cm,left=2.5cm,right=2.5cm]{geometry}

\usepackage[utf8]{inputenc} 
\usepackage{booktabs}
\usepackage{graphicx} 
\usepackage{hyperref}
\usepackage{amsmath}
\usepackage{amsfonts}
\usepackage{amssymb}
\usepackage{amsthm}
\usepackage{enumitem}
\usepackage{color}	
\usepackage{todonotes}

\definecolor{melonBase}{RGB}{255,145,116}
\colorlet{pastelMelon}{melonBase!20!white}

\colorlet{perfectBlue}{blue!60!white}

\usepackage[labelformat=simple]{subfig}
\newcommand{\B}{\mathbb B}
\newcommand{\N}{\mathbb N}
\newcommand{\Z}{\mathbb Z}
\newcommand{\R}{\mathbb R}
\newcommand{\E}{\mathbb E}
\DeclareMathOperator{\ch}{ch}
\DeclareMathOperator{\ess}{ess}
\DeclareMathOperator{\supp}{supp}
\DeclareMathOperator{\var}{var}
\DeclareMathOperator{\diam}{diam}
\DeclareMathOperator{\divr}{div}
\DeclareMathOperator{\lip}{Lip}
\DeclareMathOperator{\interior}{int}
\newtheorem{lem}{Lemma}

\newtheorem{thm}{Theorem}
\newtheorem{athm}{Theorem}
\renewcommand{\theathm}{\Alph{athm}}
\newtheorem{prop}{Proposition}
\newtheorem{cor}{Corollary}
\newtheorem{claim}{Claim}

\theoremstyle{definition}

\newtheorem{defn}{Definition}

\theoremstyle{remark}
\newtheorem{rmk}{Remark}

\usepackage{amsmath,amssymb,amsthm}

\title{Optimal Play in Hex on Finite and Infinite Boards}

\author[S. Casco]{Samuel Casco}
\address{Instituto de Matemática, Universidade Federal do Rio de Janeiro - UFRJ, Cidade Universitária - Ilha do Fundão, Rio de Janeiro  21945-909. Brazil}
\email{welington@unifei.edu.br}
\thanks{S. Casco was partially supported by CAPES-Finance Code 001 and by FAPERJ Grant number 173057/2023-3}
\author[M. J. Pacifico]{Maria José Pacifico}
\address{Instituto de Matemática, Universidade Federal do Rio de Janeiro - UFRJ, Cidade Universitária - Ilha do Fundão, Rio de Janeiro  21945-909. Brazil}
\email{pacifico@im.ufrj.br}
\thanks{M. J. Pacifico was partially supported by CAPES-Finance Code 001, CNPq Projeto Universal No. 404943/2023-3, CNPq-Brazil grant 307776/2019-0 and by
	Foundation for Research Support of the State of Rio de Janeiro (FAPERJ) grant CNE
	E-26/200.913/2022(268181).}
\author[D. Sanhueza]{Diego Sanhueza}
\address{Instituto de Matemática, Universidade Federal do Rio de Janeiro - UFRJ, Cidade Universitária - Ilha do Fundão, Rio de Janeiro  21945-909. Brazil}
\email{xuanz@id.uff.br}
\thanks{D. Sanhueza was partially supported by CAPES-Finance Code 001, FAPERJ Grant-E-26/200.038/2025 and CAPES-PrInt 88887.840543/2023-00.}

\begin{document}

\maketitle

\begin{abstract}

The game of Hex is one of the most celebrated connection games in combinatorics.
Although it is known that the first player always has a winning strategy, very little is understood about the complexity of optimal play.

Following Campbell's introduction of the parameters
$\lambda(n)$ and $\delta(n)$, measuring respectively
the length of the shortest guaranteed winning path and the minimum number of stones required by the first player to force a win,
we establish new structural properties of these quantities.

Our main result determines the exact value
\[
\lambda(5)=7,
\]
thereby confirming one of Campbell's conjectures.

We also show that the corresponding problem on the infinite strip
$5\times\infty$ has a markedly different behavior. In this setting we prove that
\[
\lambda(5\times\infty)=5,
\]
which is strictly smaller than $\lambda(5)$.

To the best of our knowledge, this paper provides the first solution to one of Campbell's 2004 conjectures and answers a question posed by Stromquist (2006) in the affirmative. Furthermore, it uncovers a fundamental distinction between finite and infinite boards, offering new insights into the nature of optimal winning strategies in Hex.
\end{abstract}
\tableofcontents
\section{Introduction} 
Hex is one of the most celebrated connection games in combinatorial
game theory. It was invented independently, and almost simultaneously,
by two mathematicians with very different backgrounds: Piet Hein, a
Danish poet and scientist, introduced the game in $1942$ under the
name \emph{Polygon}; a few years later, unaware of Hein's work, John
Nash rediscovered it as a graduate student at Princeton, where it
became known among his colleagues as \emph{Nash} or
\emph{John}. This dual origin --- part poetry, part game theory ---
already hints at the peculiar charm of Hex: a game whose rules are
almost childishly simple, yet whose strategic depth has occupied
mathematicians for over eight decades.

The game is played on an $n\times n$ rhombus tiled by regular
hexagons. Two players, whom we call Blue and Red, alternately occupy
previously empty cells, each attempting to connect a designated pair
of opposite sides of the board with the other's own; a player wins
as soon as a connected chain of their own stones joins their two
sides.

A remarkable feature of Hex, and one of the reasons it has attracted sustained mathematical attention, is that draws are impossible: every completed Hex board contains exactly one winning connection. This fundamental result, known as the Hex Theorem, was proved in \cite{Ga79}, which also established its striking equivalence with the Brouwer Fixed Point Theorem.
 As an immediate consequence, Nash's celebrated strategy-stealing argument
shows that the first player always possesses a winning strategy on
every finite board. This theorem is as elegant as it is
unsatisfying: it guarantees a winning strategy exists while offering
not the slightest hint of how to actually play it. 
Explicit winning strategies are known only for relatively small boards, and the computational complexity of finding optimal play grows rapidly with the board size. This is not surprising, since determining the winner in Hex was shown by Reisch \cite{Rei81} to be PSPACE-complete (see also Even and Tarjan \cite{ET76}). Consequently, most previous research has focused either on determining winning moves for small boards, by hand or with computer assistance (see, for instance, \cite{Mi06, No04, No05, Ya01}), or on developing heuristics for computer play (e.g. \cite{HAH09, HBJKPR05}). Comparatively little attention has been devoted to establishing structural, mathematically rigorous properties of optimal strategies.

In contrast, Campbell \cite{Ca04} proposed a different point of
view.  Rather than attempting to describe a winning strategy explicitly, he introduced two numerical invariants that measure the intrinsic
\emph{complexity} that any winning strategy must carry. The first,
$\lambda(n)$, is the minimum length of a winning path that the first
player can guarantee, regardless of the opponent's play. The second,
$\delta(n)$, measures a subtler quantity: the minimum number of
stones that are strictly indispensable to a forced win, over all
 strategies and all of the opponent's replies. Together these
parameters shift the question from ``how does one win at Hex'' to
``how \emph{simple} can a forced win be made to be'' --- a question
that, perhaps surprisingly, turns out to be genuinely difficult even
for boards as small as $5\times 5$.

Campbell computed the exact values
\[
\lambda(2)=2,\qquad \lambda(3)=3,\qquad \lambda(4)=5,
\]
and
\[
\delta(2)=2,\qquad \delta(3)=3,\qquad \delta(4)=5.
\]
For the $5\times5$ board, he established the bounds
\[
6\le \delta(5)\le 7,
\]
and conjectured that the upper bound is attained. This conjecture has remained open for almost two decades.

The main goal of the present paper is to settle this conjecture.

\begin{athm}\label{t-main}
\[
\lambda(5)=7.
\]
\end{athm}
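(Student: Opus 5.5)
Proving $\lambda(5)=7$ splits into the two inequalities $\lambda(5)\le 7$ and $\lambda(5)\ge 7$.

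\textbf{Upper bound.} This is essentially Campbell's. Since $\lambda(n)\le\delta(n)$ (a win forced with at most $k$ indispensable stones yields a winning path of at most $k$ cells), the bound $\delta(5)\le 7$ from \cite{Ca04} gives $\lambda(5)\le 7$. We will nevertheless re-exhibit an explicit strategy for Blue on the $5\times5$ board. It starts with the central opening and then uses a bridge/ladder decomposition into local templates (bridges and edge templates of rows 2 and 3). Each reply to a Red intrusion keeps a target chain of at most seven cells, and the verification is a finite case check over Red's intrusions into each template.

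\textbf{Lower bound.} This is the real content. We must show that for every first move and every Blue strategy, Red has a counterstrategy forcing every Blue winning chain to contain at least $7$ cells. Any top-to-bottom chain on a $5\times5$ board has at least $5$ cells, so the goal is to exclude chains of length $5$ and $6$.

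We rephrase this as a modified game. Red wins the modified game if Blue never completes a connecting chain of length $\le 6$. A chain of length $5$ or $6$ is a path that moves "straight down" in all but at most one step. Such chains are therefore few, and can be enumerated as a finite family $\mathcal P$ of cell sets.

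Red's task becomes a Maker--Breaker style blocking game: Red must hit every set in $\mathcal P$ before Blue fills one. We reduce by symmetry: the $180^\circ$ rotation, and the reflection swapping the obtuse corners, leave the game invariant. This leaves $9$ inequivalent opening moves. For each opening we give an explicit Red response together with a pairing or weighting argument. Concretely, we look for a pairing of cells such that every set in $\mathcal P$ (after the Blue stone and Red's reply) contains a pair. Red then answers in the paired cell, and Blue can never complete a short chain. Where no perfect pairing exists, we split into cases on Blue's second move and repeat.

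\textbf{Main obstacle.} The obstacle is the central and near-central openings, where $\mathcal P$ is densest and a single pairing should fail. There I would first try an Erd\H{o}s--Selfridge-type potential restricted to $\mathcal P$, weighting sets by $2^{-(\text{unclaimed cells})}$, to show Red keeps total potential below $1$. If that is too weak, I would fall back on a short explicit case tree over Blue's second and third moves, closing each leaf with a pairing. The tree can be certified with computer assistance, but it should be kept small enough to present by hand.
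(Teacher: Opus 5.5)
\textbf{The framework is sound, but the lower bound is not carried out.} Your upper bound, via $\lambda(5)\le\delta(5)\le 7$ from Campbell, is fine. Recasting the lower bound as a Maker--Breaker game for Red on the family $\mathcal P$ of top-to-bottom chains with at most six cells is also sound: if Red never lets Blue own all of some $A\in\mathcal P$, every eventual winning path has length at least $7$. This is what the paper's case analysis does implicitly, with its BR-blocks acting as your pairings. There are, however, three gaps.

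\textbf{The symmetry reduction is wrong.} The reflection swapping the obtuse corners ($(i,j)\mapsto(j,i)$ in the paper's coordinates) sends Blue's two edges onto Red's. Every diagonal reflection of the rhombus exchanges adjacent sides. So it does not preserve $\mathcal P$, and the only nontrivial symmetry of the game is the half-turn $(i,j)\mapsto(6-i,6-j)$. Your count of $9$ uses the full four-element group, including the colour-swapping reflections. The correct count is $13$: the centre plus twelve half-turn pairs. This is exactly the paper's split into $\mathfrak{S}_C(5)$, the eight openings of $\mathfrak{S}^+(5)$ and the four of $\mathfrak{S}^-(5)$, with $\mathfrak{S}_0(5)$ handled by rotation. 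As written, four openings would go unexamined.

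\textbf{No actual Red strategy is given.} The lower bound is the whole content of the theorem, and you only describe the form an argument would take. No Red reply, pairing or case split appears for any opening.

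\textbf{The potential method cannot get started.} $\mathcal P$ is not small. Each step to the next row has two admissible cells; for example, the short diagonal between the obtuse corners is itself a five-cell chain. So ``straight down in all but one step'' is misleading, and there are already $48$ five-cell chains. Hence
\[
\sum_{A\in\mathcal P}2^{-|A|}\ \ge\ \frac{48}{32}\ =\ \frac{3}{2}.
\]
Blue's first stone only raises this potential. The Erd\H{o}s--Selfridge argument needs it below $1$ with Red to move. So it cannot ``keep'' the potential below $1$, and everything rests on the explicit case tree you defer.

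\textbf{What the paper supplies instead.} The paper gives that case tree. For each opening class it fixes a Red reply: $r_1=(2,4)$ for $\mathfrak{S}^+(5)$ and $r_1=(2,5)$ for $\mathfrak{S}^-(5)$, with the centre handled by a separate lemma. It then splits on Blue's second move. In each branch it uses bridges and the edge templates $X_c^{\pm}$ to do one of two things:
\begin{itemize}
\item let Red win the Hex game outright; or
\item force every Blue chain to contain three cells in one line parallel to Blue's edges, or two cells in each of two such lines.
\end{itemize}
Until you exhibit such Red strategies for all $13$ openings, your argument is a plan rather than a proof.
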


Our proof is entirely elementary and does not rely on exhaustive computer search. Instead, we identify a small collection of local tactical patterns—called BR-blocks, a Hex-theoretic analogue of what are known elsewhere as {\em carriers of virtual connections} and use them to organize a concise, symmetry-reduced analysis of Blue's possible opening moves. In every case, the corresponding BR-blocks yield barriers that force any winning path to contain at least \textcolor{blue}{seven stones.}

The same patterns yield, as a
by-product, a unified strategy achieving the matching upper bound
$\lambda(n)\le 2n-3$ for every $n\in\{3,4,5\}$, suggesting that the
equality $\lambda(n)=\delta(n)=2n-3$ may persist well beyond the
cases settled here.

We also answer a question posed by  Stromquist \cite{St07}, showing that the corresponding problem on the infinite strip $5\times\infty$ exhibits a fundamentally different behavior.

\begin{athm}\label{t-main2}
\[
\lambda(5\times\infty)=5.
\]
\end{athm}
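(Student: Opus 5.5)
We need to prove two inequalities: a lower bound $\lambda(5\times\infty)\ge 5$ and an upper bound $\lambda(5\times\infty)\le 5$. We adopt the convention, matching Stromquist's setting, that Blue moves first and must connect the two long-distance sides at distance $5$ (the top and bottom rows of the strip). Red tries to prevent this, and the strip is unbounded in the horizontal direction.

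\textbf{Lower bound.} This half is immediate. Any chain joining the top row to the bottom row must meet each of the $5$ rows, and distinct rows are disjoint. So every winning path contains at least $5$ Blue stones, and $\lambda(5\times\infty)\ge 5$ regardless of strategy.

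\textbf{Upper bound.} The real content is an explicit Blue strategy that guarantees a winning chain of exactly $5$ stones, one per row, hence a geodesic vertical path. The plan exploits the infinitude of the strip, which is exactly what fails on the $5\times5$ board.

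\begin{enumerate}
\item Blue opens in the middle row, row $3$, at some cell $c$.
\item Red's reply is local: it affects only finitely many cells near $c$.
\item Blue can therefore ``restart'' far away whenever Red commits stones in one region. Blue maintains a family of candidate columns, spaced far apart, each carrying a partial geodesic.
\item Using the BR-blocks introduced for Theorem~\ref{t-main}, Blue certifies, for a stone in row $3$ (or a pair of stones in rows $2,3,4$), a virtual connection by bridges to row $1$ and row $5$. The carrier must consist only of cells lying on shortest paths.
\end{enumerate}

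A single row-$3$ stone with two bridges on each side does not give a $5$-stone geodesic against all replies. Red can intrude into the two-row edge template, which forces Blue to take a detour and lengthen the path. The key move is a threat-counting argument. Whenever Red plays inside the carrier of the current candidate, Blue either answers locally, if the response keeps the path geodesic (the bridge reply), or abandons the candidate and opens a fresh row-$3$ stone at distance greater than $2$ from every stone played so far. Abandoning is possible precisely because the strip is infinite.

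To make this terminate, one would design a local pattern that is a true virtual connection of length $5$ with the following property: every Red intrusion either is answered geodesically, or spends Red's move on an intrusion that Blue can answer with a second, independent threat. A double threat over two distinct geodesic completions then wins. Concretely, I would attempt to prove the following. After Blue holds stones in row $3$ at two adjacent-by-bridge positions, Red cannot simultaneously spoil the geodesic completions of both, so one of them completes in exactly $5$ stones.

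\textbf{Main obstacle.} I expect the main difficulty to be this finite case analysis. One must check that the local pattern on a $5\times k$ window, for modest $k$, is a geodesic virtual connection against every Red reply, using symmetry (the $180^\circ$ rotation and horizontal translation) to cut the cases. If no single window works, the fallback is the restart argument: show that each failed attempt costs Red at least as many stones inside the window as it costs Blue, so that Blue eventually gains a tempo in a fresh region and completes the $5$-stone path.
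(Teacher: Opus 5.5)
\paragraph{Verdict.} Your proposal has a genuine gap: the upper bound, which is the whole content of the theorem, is not proved. Your lower bound is correct and is the paper's own argument, since every top--bottom chain meets each of the five rows.

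\paragraph{Why the ingredients fail.} What you give for the upper bound is a plan (``I would attempt to prove\ldots'', ``if no single window works, the fallback is\ldots''), and its concrete ingredients do not work as stated.
\begin{enumerate}
\item Row $3$ is a poor anchor. The two row-$2$ neighbours of a row-$3$ stone share exactly one row-$1$ neighbour $q$. Once Red occupies $q$, the remaining one-stone-per-row completions to the top are two disjoint pairs. Each Blue move in one pair is answered in the same pair, so Blue cannot force that link even when it is Blue's turn. Red thus neutralises it with a single stone, and likewise at the bottom.
\item Your key claim about two row-$3$ stones ``at adjacent-by-bridge positions'' does not describe an actual configuration. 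The two ends of a bridge never lie in the same row.
\item Most importantly, the restart argument cannot terminate. Suppose Blue abandons a dead candidate and plays at distance greater than $2$ from every previous stone. Then the fresh region is a copy of the opening position, and Red simply repeats its refutation. Because the players alternate single stones, ``Red spends at least as many stones as Blue'' yields no tempo. Blue gains a tempo only by making a move that Red is forced to answer elsewhere. That requires earlier stones to still carry live threats, which your independence requirement explicitly rules out. An infinite game is not a win for Blue.
\end{enumerate}

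\paragraph{The missing idea.} What is missing is how two distant stones are made to cooperate through forcing moves.
\begin{itemize}
\item The paper opens at $b_1=(4,1)$, next to the bottom edge. The bottom link is then a BR-block that Red cannot break, and Red must reply inside the five-cell set $X_{b_1}$; otherwise $b_2=(2,2)$ wins at once.
\item Red's reply closes one side: $r_1\in\mathcal O^+_{b_1}$ or $r_1\in\mathcal O^-_{b_1}$. Blue places $b_2$ on the side that $r_1$ leaves open, at a column of suitable parity.
\item Blue then runs the zig-zag. Blue alternates between rows $2$ and $4$, and each new stone threatens a one-per-row path through a single row-$3$ cell, so Red is forced to answer in row $3$.
\item When this ladder reaches the other stone, Blue obtains two independent geodesic virtual connections (Patterns OP, EZ, SB). Every completion has exactly one stone per row.
\end{itemize}
This forcing ladder running into a distant ``escape'' stone, not a sequence of independent restarts, is what turns the infinitude of the strip into a guaranteed winning path of length $5$.
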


 Since $\lambda(n\times\infty)\geq n$, the proof of Theorem B falls on defining an explicit strategy with length $5$.
To the interested reader on Hex, we recommend \cite{HR06, HBJKPR05} and references therein. 
\vspace{0.2cm}

The remainder of the paper is organized as follows.
Section~\ref{s-preliminaries} recalls the basic definitions of
$\lambda(n)$ and $\delta(n)$ and records the elementary monotonicity
and embedding facts we rely on later. Section~\ref{s-BR-blocks} introduces
$BR$-blocks and the auxiliary lemmas on the $3\times3$ and
$4\times4$ boards that make the main case analysis tractable.
Section~\ref{s-proof-teoA} contains the proof of Theorem~\ref{t-main},
organized by the opening move of the first player. 
Section~\ref{s-infty} proves Theorem \ref{t-main2}. 
We close with
some remarks on the values $\delta(5)$, the general upper bound
$\lambda(n)\le 2n-3$, and the open problems this leaves behind.

\section{Preliminaries}\label{s-preliminaries} 
We begin by describing the standard $n\times n$ Hex board and introducing the notation and terminology used throughout the paper. Two opposite sides of the board are colored blue, while the other two are colored red.

Two players, called \emph{Blue} and \emph{Red}, alternately occupy previously empty hexagonal cells, with Blue moving first. Blue wins by constructing a connected chain of blue stones joining the two blue sides of the board, while Red attempts to connect the remaining pair of opposite sides. We assume familiarity with the basic properties of Hex and refer the reader to \cite{Ca04,Ga79} for further background. In particular, we shall repeatedly use the fact that draws are impossible and, consequently, that the first player always possesses a winning strategy. 
 
 \subsection{Winning paths} 
  A \emph{winning path} is a connected chain of stones joining the two sides assigned to the winning player. More precisely, a winning path is a sequence of cells \[\gamma= c_1,c_2,\ldots,c_k, \] such that \begin{itemize} \item $c_1$ belongs to one designated side of the board; \item $c_k$ belongs to the opposite designated side; \item consecutive cells are adjacent; \item every cell is occupied by the winning player. \end{itemize} The \emph{length} of a winning path $\gamma$ is the smallest number of essential stones it contains and it is denoted by $|\gamma|$. Different winning plays generated by the same strategy may produce winning paths of different lengths.  We denote by $\lambda(S)$ the greatest length of a winning path generated by the strategy $\mathcal{S}$. The following parameter measures the shortest winning connection that the first player can always guarantee. 
  
  \begin{defn} The quantity $\lambda(n)$ is the smallest integer $k$ such that the first player has a strategy guaranteeing that every resulting winning path in the $n\times n$ Hex board contains at most $k$ stones. 
  \end{defn} 
 Since every path joining opposite sides of an $n\times n$ board must meet at least one cell in each row, we always have that $ \lambda(n)\ge n$.  Actually, it is known from Campbell's work \cite[Theorem 4]{Ca04} that $\lambda(n)$ is strictly greater that $n$ for any $n\geq4$. Stromquist \cite{St07} obtains, later, that $\lambda(n)\geq n+\lfloor \frac{n-2}{4}\rfloor $ implying that
$$
\lim_{n \to \infty} (\lambda(n)-n)= \infty.
$$       

  \subsection{Winning sets} 
  Not every stone played during a winning game is necessarily essential. Some stones merely support the strategy, whereas others are indispensable for the final winning connection. 
  

  \begin{defn} Let $\mathcal S$ be a winning strategy for Blue and consider a complete play against an arbitrary strategy of Red. A \emph{winning set} is the collection of Blue's stones used in the game. We denote by $\delta(\mathcal{S})$ the maximum cardinality of a winning set generated by $\mathcal{S}$.
  \end{defn}

  Campbell introduced the following parameter.
   \begin{defn} The quantity $\delta(n)$ is the minimum, over all winning strategies for the first player, of the maximum cardinality of a winning set produced against any possible response of the second player. In other words, $\delta(n)=\min\{\delta(\mathcal{S}): \mathcal{S}\mbox{ is a winning strategy for the first player}\}.$ \end{defn} 
   Thus, while $\lambda(n)$ measures the length of the shortest guaranteed winning path, $\delta(n)$ measures the minimum number of essential stones that every winning strategy must eventually produce. The two parameters satisfy an immediate relation. 
  
  \begin{prop}\label{prop:lambda-delta} For every $n\ge2$, \[ \lambda(n)\le\delta(n). \] 
  \end{prop}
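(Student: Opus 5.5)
The plan is to fix a winning strategy $\mathcal S$ for Blue that attains the minimum in the definition of $\delta(n)$, so that $\delta(\mathcal S)=\delta(n)$, and to show that the same strategy $\mathcal S$ already guarantees winning paths of length at most $\delta(\mathcal S)$. Since $\lambda(n)$ is the smallest $k$ for which the first player has \emph{some} strategy whose every resulting winning path has at most $k$ stones, it will then follow that $\lambda(n)\le\lambda(\mathcal S)\le\delta(\mathcal S)=\delta(n)$. Existence of a minimizing strategy is not an issue: the board is finite, Blue has at least one winning strategy, and the values $\delta(\mathcal S)$ are positive integers, so the minimum is attained.

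The key step is the pointwise comparison. Take any complete play in which Blue follows $\mathcal S$ and Red plays arbitrarily, and let $W$ be the resulting winning set, i.e.\ the collection of Blue stones used in the game. Any winning path $\gamma=c_1,\dots,c_k$ produced by this play consists of cells occupied by Blue, so its cells lie in $W$. By definition, $|\gamma|$ is the smallest number of essential stones in the path, and in particular it is at most the number of distinct Blue stones on it. Hence
\[
|\gamma|\le |W|\le \delta(\mathcal S).
\]
Taking the maximum over all Red responses and all winning paths produced gives $\lambda(\mathcal S)\le\delta(\mathcal S)$.

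The only point requiring care, and the one I expect to be the main obstacle, is interpretive rather than technical. One has to ensure that the "stones" counted in the length of a winning path and the stones counted in a winning set are measured consistently. Concretely, the essential stones of $\gamma$ must be Blue stones placed during the play, hence members of $W$. I would make this explicit by noting that one may pass from $\gamma$ to a minimal sub-chain joining the two blue sides. The cells of this sub-chain are distinct Blue cells, so its cardinality is bounded by $|W|$. With this observation the inequality is immediate, and it holds for every $n\ge2$ with no dependence on the board size.
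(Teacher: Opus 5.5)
Your proof is correct and follows the paper's argument: every winning path consists of Blue stones from the winning set, so its length is at most the cardinality of that set, and you then pass to an optimal strategy. Your version is slightly more explicit than the paper's ``taking the minimum'' step, because you fix a $\delta$-minimizing strategy $\mathcal S$ and chain $\lambda(n)\le\lambda(\mathcal S)\le\delta(\mathcal S)=\delta(n)$.
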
 
  \begin{proof} Every winning set contains a winning path. Consequently, the length of that path is at most the cardinality of the winning set. Taking the minimum over all winning strategies yields the desired inequality. 
  \end{proof} 
  \subsection{Known results} 
   Campbell \cite{Ca04} computed the exact values \[ \lambda(2)=2,\qquad \lambda(3)=3,\qquad \lambda(4)=5, \] and \[ \delta(2)=2,\qquad \delta(3)=3,\qquad \delta(4)=5. \] For the $5\times5$ board Campbell proved \[ 6\le \lambda(5)\le \delta(5)\le7, \] which naturally led to the conjecture \[ \lambda(5)=\delta(5)=7. \] The purpose of this paper is to establish this equality. 
  \section{BR-blocks}\label{s-BR-blocks}

Visually, we can see that two distinct stones, say $c_1$ and $c_2$, have distance $d(c_1,c_2)=2$ if there is a stone $c$ so that $c_1cc_2$ is an {\it essential} $3$-path. If $d(c_1,c_2)=2$ then there are at most two different stones $c$ and $c'$ guaranteeing a connection between $c_1$ and $c_2$. 
These two stones are necessarily adjacent. In the Hex literature, such a
configuration is usually regarded as the carrier of a virtual
connection between $c_1$ and $c_2$. Throughout this paper, however, we
shall call it a \emph{$BR$-block}. The reason for this change in
terminology is that our use of these configurations goes beyond virtual
connections. Besides serving as carriers, $BR$-blocks will also be used
to construct barriers that constrain the geometry of Blue's winning
paths, allowing us to derive lower bounds on the length of any winning
path.

The importance of considering these $BR$-blocks lies in the following pairing property called a {\it virtual connection}. Suppose that Blue already occupies $c_1$ and $c_2$. 
If Red plays one of the two stones in the $BR$-block, then Blue can immediately respond by occupying the other one, thereby guaranteeing a connection between
$c_1$ and $c_2$. The analogous statement holds with the roles of Blue and Red interchanged.\\

 We can define {\it virtual connection} in a more general way. Two cells $c_1$ and $c_2$ form a virtual connection for Blue if there is a sequence of couple of unoccupied cells such that taking any one of them in each couple it is possible to obtain a Blue path connecting $c_1$ and $c_2$. The first part describes the most simple virtual connection called a {\it bridge} and it is showed at Figure \ref{f-BR-blocks}. Note that any virtual connection is a composition of bridges.

\begin{figure}[h]
\begin{center}
\includegraphics[width=14cm,height=3cm,angle=0]{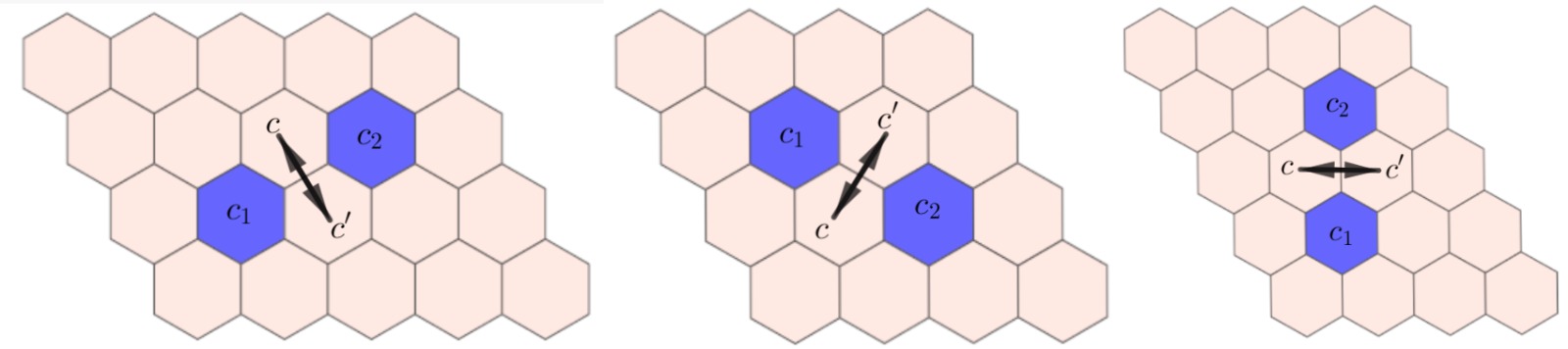}
\caption{Virtual connection. Three different $BR$-blocks connecting $c_1$ and $c_2$.}\label{f-BR-blocks}
\end{center}
\end{figure}

If $c=(i,j)$ with $i=4$ and $1\leq j\leq n-2$, we define the set $X^+_c$ being the collection of the cells $(3,j), (3,j+1), (2,j+1), (1,j+1)$, and $(1,j+2)$. Whether a cell $c$ is taken in this position and the set $X_c^+$ is free, we can take the cell $(2,j+1)$ and assure a path of length $4$ from $c$ to the top, by creating 2 $BR$-blocks. In a similar manner, we can define the set $X^{-}_c$ to connect $c$ with the bottom. We put $X^1_c$ if we are in position to create a path of length $4$ from $c$ to the right side and $X^{-1}_c$ for the left side, see Figure \ref{f-X-Zone}.  In this case, we say that there is a {\it winning virtual connection at $c$ through $(2,j+1)$}.

\begin{figure}[h]
\begin{center}
\includegraphics[width=11.0cm,height=3.0cm,angle=0]{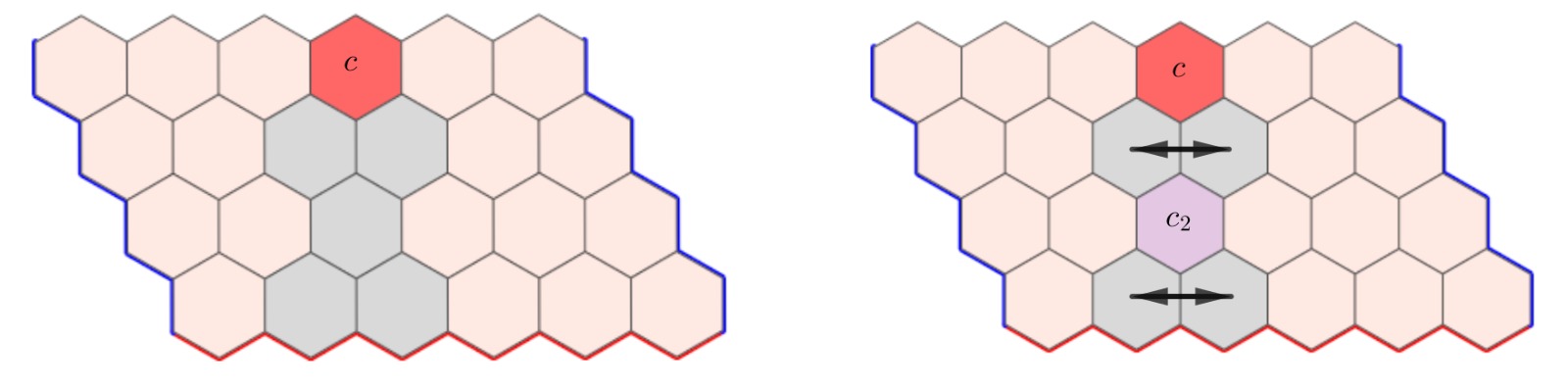}
\caption{The region $X^{-}_c$ generated from the cell $c$ (left). A winning virtual connection at $c$ through $c_2$.}\label{f-X-Zone}
\end{center}
\end{figure}
   
\begin{lem}[{\bf $3$-Center strategy}]\label{l-center} In a $3\times 3$ board, there is a unique optimal winning strategy starting in the center.
\end{lem}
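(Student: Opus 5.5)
We interpret the lemma as saying two things. First, after Blue opens at the center $c_0=(2,2)$, Blue has a strategy in which every winning path has length exactly $3$, so $\lambda(3)=3$ is realized from the center. Second, this strategy is forced: at every stage Blue's reply to Red is determined if a winning path of length $3$ is to be kept. The plan is to write the strategy as a pairing of cells, check it against all Red replies, and then show that any deviation by Blue lets Red stretch the path to length at least $4$, or win.

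For the construction, note that the center is adjacent to six cells. Two of them lie in the top row, namely $(1,2)$ and $(1,3)$, and they are adjacent to each other. Two lie in the bottom row, namely $(3,1)$ and $(3,2)$, and they are also adjacent. So $\{(1,2),(1,3)\}$ is a $BR$-block joining $c_0$ to the top side, and $\{(3,1),(3,2)\}$ is a $BR$-block joining it to the bottom side. These two pairs are disjoint, so the strategy is as follows. If Red plays a cell of either pair, Blue takes the partner. If Red plays anywhere else, Blue plays any cell of a still-untouched pair, or simply claims one cell of each pair. In every line of play the final blue chain is top cell, center, bottom cell, which is a path of length $3$. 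Since $\lambda(3)\ge 3$, this is optimal.

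For uniqueness, we argue in two steps.

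First, we show that the top and bottom pairs are the only length-$3$ carriers through the center. A winning path of length $3$ meets each row exactly once, and its middle cell is $c_0$. Its top cell must therefore be a neighbour of $c_0$ in row $1$, and its bottom cell a neighbour of $c_0$ in row $3$. This leaves exactly the two pairs above.

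Second, we show that Blue's reply is forced. Suppose Red plays $(1,2)$ and Blue does not answer at $(1,3)$. Red then takes $(1,3)$ as well. After that, no length-$3$ path through the center exists, and any Blue win must detour through row $2$, giving length at least $4$. The bottom pair is handled the same way, and the board's $180^\circ$ symmetry cuts the checking in half. When Red plays outside both pairs, Blue's moves do not change the final path, so the strategy is unique up to these irrelevant moves.

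The main obstacle is pinning down what ``unique'' means. The natural reading is uniqueness of the essential stones, that is, of the winning set $\{\text{top},c_0,\text{bottom}\}$ up to the pairing. I would state that interpretation explicitly, so that the forced-reply argument settles uniqueness completely.
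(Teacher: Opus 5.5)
Your construction and your Step~1 are the paper's argument, with rows and columns exchanged (the paper has Blue joining left and right through the blocks $(2,1)$--$(3,1)$ and $(1,3)$--$(2,3)$). Both pair the two neighbours of $c_0=(2,2)$ on each of Blue's sides, and both observe that a length-$3$ path through $c_0$ must use one cell of each pair. The paper stops there, reading ``unique'' as $\delta(3)=\lambda(3)=3$ together with this shape of the path.

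Your Step~2, which claims Blue's replies are forced at every stage, is where the proposal goes wrong. You only exclude length-$3$ paths through the center, but $(1,1),(2,1),(3,1)$ avoids $c_0$. At Blue's first reply this is harmless: when Red takes $(1,3)$, Blue owns at most one cell of column~$1$, so Red can block that column on the next move. At later stages the claim is false. Take the line $(2,2)$, Red $(3,3)$, Blue $(3,1)$ (a move your own strategy permits), Red $(1,2)$. The non-partner reply $(1,1)$ threatens both $(2,1)$ and $(1,3)$, and each completes a length-$3$ path. So Red cannot prevent a length-$3$ win, and if Red takes $(1,3)$ the winning path even avoids $c_0$. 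Your remark that Blue's answers to Red's outside moves ``do not change the final path'' also fails. After $(2,2)$, R$(3,3)$, B$(2,1)$, R$(2,3)$, B$(1,1)$, R$(3,1)$, Blue must reply $(3,2)$ (the partner, and also the only way to avoid losing). Blue then wins along $(1,1),(2,1),(2,2),(3,2)$, which has length $4$.

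So under the $\lambda$-reading, neither the replies nor the shape of the winning path is determined. The mechanism ``a deviation lets Red stretch the path to length $4$'' therefore cannot give uniqueness. The repair is the winning-set reading you mention in your last paragraph, which is also what the paper's gloss $\delta(3)=3$ amounts to. If Blue's winning set has only three stones, the winning chain uses all of them, one per row. Hence they are $c_0$ together with a top and a bottom neighbour of $c_0$, that is, one cell of each pair. It follows that every Blue move after the first must lie in an unresolved pair. A partner reply is then forced: if Blue ignores an intrusion into an unresolved pair, Red takes its other cell, and Blue has no neighbour of $c_0$ on that side by his third move. This yields uniqueness up to Blue's choice of pair cell when Red plays outside the pairs, with no path-length analysis needed.
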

By unique in this lemma we undestand that $\delta(3)=\lambda(3)=3$ and that a winning path has the form 
$$
(2,2)\rightarrow (BR-\mbox{block})\rightarrow(BR-\mbox{block}).
$$
{\textit Proof.} It is  very natural because once Blue choice the center cell he creates two $BR$-blocks each connecting the center with the assign sides. Hence if Red chooses to play in the column $1$ or $2$, then Blue takes an unoccupied place of the left $BR$-block $(2,1)-(3,1)$. If Red plays in the third column, Blue plays in a unoccupied stone of the right $BR$-block $(1,3)-(2,3)$. Note that these movements are independent, meaning to play in columns $1$ or $2$ does not interfers in the game of the column $3$. So, for any choices of Red, Blue creates a winning path of the form
$$
(2,2)\rightarrow (BR-\mbox{block})\rightarrow (BR-\mbox{block}).
$$
This is the unique optimal strategy because any winning path, when the game iniciates in the center must be of the above form, since any winning ending out of the $BR$-blocks ending either the cell $(1,1)$ or $(3,3)$, or both, having a distance greater than $2$ to the center implyinig that the path has a lengh at least $4$. \hspace{14cm} \qquad$\blacksquare$

\begin{lem}  In a $4\times4$ Hex board, the first player always has a path connecting the cell $(3,2)$ (resp. $(2,3)$) with the right (resp. left) side. Furthermore, this path is contained in a winning path.
\end{lem}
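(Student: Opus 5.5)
We read the lemma as follows: on the $4\times4$ board Blue, playing first at $(3,2)$ (symmetrically $(2,3)$), can force a connection to the designated side using only $BR$-blocks, and this connection extends to a winning path. We use the convention of the $3$-Center strategy (Lemma~\ref{l-center}): rows are indexed by $i$, columns by $j$, and Blue's assigned sides are the first and last columns. The case of $(2,3)$ follows from $(3,2)$ by the $180^\circ$ rotation $(i,j)\mapsto(5-i,5-j)$. This rotation preserves the board and both pairs of sides, but exchanges left and right, so we only treat $(3,2)$.

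The plan is to place the stone at $c=(3,2)$ and exhibit two disjoint groups of carriers: one toward the right side and one toward the left side. The left side is reached immediately through the $BR$-block $\{(3,1),(4,1)\}$, the two common neighbours of $(3,2)$ and the left edge. Toward the right side, the cell $(2,4)$ is at distance $2$ from $(3,2)$ in the sense of Section~\ref{s-BR-blocks}, since $(3,2)$--$(2,3)$--$(2,4)$ is an essential $3$-path. A stone at $(2,4)$ touches the right side, so the intended connection is
\[
(3,2)\rightarrow(BR\mbox{-block})\rightarrow(2,4),
\]
or, if Red interferes, the alternative bridge $(3,2)\rightarrow(BR\mbox{-block})\rightarrow(3,4)$. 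Concretely, we would define a region analogous to $X^{1}_c$, namely the cells $(2,3),(3,3),(2,4),(3,4),(1,4)$. We then show that the second-row pairs form a ladder: for each Red move in this region there is a Blue reply restoring a virtual connection to the right edge. This is exactly the pairing argument of Lemma~\ref{l-center}, applied to a $2\times3$ strip. The key steps are as follows.

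\begin{enumerate}
\item Fix the carriers and check that they are pairwise disjoint, both from each other and from the left block.
\item Run a case analysis on Red's reply, split as in Lemma~\ref{l-center} by whether Red's move falls in the left block, in the right region, or elsewhere. In the last case Blue strengthens a connection by playing the pivot cell $(2,4)$ or $(3,4)$.
\item Observe that independence of the regions makes the combined responses a valid strategy, so the union of the two connections is a winning path.
\end{enumerate}
Step 3 also proves the ``furthermore'' part of the lemma.

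The main obstacle is Step 2 on the right: a $4\times4$ board does not leave enough room for two independent bridges toward the right edge from $(3,2)$. Red's double threat at $(2,3)$ followed by $(3,4)$ must therefore be handled explicitly. The first attempt will be to answer $(2,3)$ with $(3,3)$, which is adjacent to $(3,2)$ and has the two right-edge cells $(3,4),(2,4)$ as a $BR$-block to the edge. This yields a path of length $4$, consistent with $\lambda(4)=5$ after accounting for the left block. If this fails in some sub-case, we would enlarge the region to include $(4,3),(4,4)$ and use the bottom bridge instead.
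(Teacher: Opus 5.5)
Your reduction of $(2,3)$ to $(3,2)$ via $(i,j)\mapsto(5-i,5-j)$ matches the paper's one-line proof, as does the use of the left $BR$-block $\{(3,1),(4,1)\}$ together with a disjoint right-hand region. The gap is the right-hand carrier.

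First, $(3,2)$ and $(3,4)$ have only one common neighbour, $(3,3)$, so there is no $BR$-block from $(3,2)$ to $(3,4)$. Second, the region $\{(2,3),(3,3),(2,4),(3,4),(1,4)\}$ does not carry a connection when Red moves first. Red's reply $(2,4)$ is the pivot of your bridge and belongs to both edge pairs $\{(1,4),(2,4)\}$ and $\{(2,4),(3,4)\}$. Within the region, $(3,3)$ then keeps only the link $(3,4)$ and $(2,3)$ only the link $(1,4)$, and Red cuts both in turn. So Step 2 fails exactly in the case you did not examine. The ``double threat'' $(2,3)$ then $(3,4)$ that you flag is harmless: answer $(3,3)$, then $(2,4)$. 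Your diagnosis that the board lacks room for a second route is also off, since a second route runs along the bottom row.

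The missing idea is to reserve that bottom route from the start. Use the eight-cell ziggurat $\{(2,3),(3,3),(4,2),(4,3),(1,4),(2,4),(3,4),(4,4)\}$ as the right carrier, with these replies:
\begin{itemize}
\item Red at $(2,3)$ or $(1,4)$: play $(3,3)$, with edge block $\{(2,4),(3,4)\}$.
\item Red at $(3,3)$ or $(3,4)$: play $(2,3)$, with edge block $\{(1,4),(2,4)\}$.
\item Red at $(4,2)$, $(4,3)$, $(4,4)$, or outside both templates: play $(2,4)$, with block $\{(2,3),(3,3)\}$.
\item Red at a cell of $\{(3,1),(4,1)\}$: take the other one.
\item Red at $(2,4)$: play $(4,3)$. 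It is joined to $(3,2)$ by the $BR$-block $\{(3,3),(4,2)\}$ and to the edge by $\{(3,4),(4,4)\}$.
\end{itemize}
In every case the blocks used are disjoint from each other and from $\{(3,1),(4,1)\}$, so the pairing strategies combine as in your Step 3. The longest resulting path has five stones, in line with $\lambda(4)=5$: a first-column cell, $(3,2)$, one of $(3,3),(4,2)$, then $(4,3)$, then one of $(3,4),(4,4)$.

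Your fallback of enlarging the region by $(4,3),(4,4)$ points this way but has two defects. It omits $(4,2)$, which is half of the carrier of the bridge $(3,2)$--$(4,3)$. It also does not identify $(2,4)$ as the Red move that forces this route. Since the paper's proof only appeals to $X^1_c$ and $[(3,1)(4,1)]$, this case table is what a complete argument has to supply.
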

{\textit Proof.} In these conditions, it is possible to define the set $X_c^1$ (resp. $X_c^{-1}$) where $c=(3,2)$ (resp. $c=(2,3)$), then the is a path connecting $c$ and the right (left) side of the board, as already explained before. In the first column is defined the $BR$-block $[(3,1)(4,1)]$. \hspace{4,5cm} \qquad$\blacksquare$\\

The following is a 'vertical version' of the above lemma.

\begin{lem}\label{X-zone} In a $4\times4$ Hex board, for any play starting at either $(1,1)$, $(2,1)$ or $(3,1)$ the second player can construct a path connecting the cell $(2,3)$ with the bottom. 
\end{lem}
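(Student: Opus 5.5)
The plan is to reduce the statement to the $X$-zone mechanism built just before the $3$-Center lemma. With $c=(2,3)$ we use $X^{-}_c$, the "downward" analogue of $X^+_c$. By the vertical symmetry of the board, $X^{-}_c$ consists of the cells $(3,3),(3,2),(4,2)$ together with the $BR$-block in row $4$ (namely $(4,2)$ and $(4,3)$ adjacent to $(3,2)$), plus the bridge cells between $(2,3)$ and $(3,2)$. More precisely, I will write $X^-_{(2,3)}$ as the union of two $BR$-blocks. The first is the carrier $\{(2,2),(3,3)\}$, or its correct analogue, of the bridge from $(2,3)$ to $(3,2)$. The second is the carrier $\{(4,1),(4,2)\}$ of the edge bridge from $(3,2)$ to the bottom. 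First I will fix these coordinates from Figure~\ref{f-X-Zone}.

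The key observation is that none of the three possible opening cells $(1,1)$, $(2,1)$, $(3,1)$ belongs to $X^-_{(2,3)}$, nor to the cell $(2,3)$ itself. This holds as long as the carriers are chosen on the side away from column $1$. Where a carrier would use $(3,1)$ or $(4,1)$, I will switch to the other orientation of the edge bridge, e.g. $\{(4,2),(4,3)\}$ below $(3,2)$ or $(3,3)$. So the second player, who owns $(2,3)$ after answering the opening, finds $X^-_c$ entirely free.

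Then the argument is the standard pairing strategy. The second player takes the central cell, $(3,2)$ or its substitute, creating two disjoint $BR$-blocks: one joining $(2,3)$ to that cell and one joining that cell to the bottom row. Whenever the first player plays in one cell of a block, the second player answers in the other. Because the blocks are disjoint, these responses are independent, exactly as in the proof of Lemma~\ref{l-center}. The result is a connected path of length $3$ from $(2,3)$ to the bottom, made of $(2,3)$, the center cell, and one cell of each block.

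The main obstacle is the case analysis on the opening cell. I must check that for each of $(1,1),(2,1),(3,1)$ there is a choice of center cell and carriers avoiding it. The opening $(3,1)$ is the delicate one, since it sits next to $(3,2)$ and $(4,1)$. For it I would use the center $(3,3)$ if $(3,2)$'s edge carrier is compromised: $(2,3)$ is adjacent to $(3,3)$ directly, and $(3,3)$ has the edge bridge $\{(4,2),(4,3)\}$ to the bottom. I also need to handle tempo: the first player's moves may be sequential before the second player occupies $(2,3)$. I will interpret the statement as saying that the first player's stones lie in column $1$, rows $1$–$3$, so the rest of the board's lower-right region remains free.
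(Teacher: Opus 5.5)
\textbf{Gap: the move order after Red's reply.} The paper states this lemma without proof. Its only justification is the $X$-zone remark, and it invokes the lemma in Case I of $\mathfrak{S}^+(5)$ only after Blue's second move has landed outside $X^-_c$, that is, with Red to move. Your argument carries the same hidden assumption, and you never state it. You say the second player ``owns $(2,3)$ after answering the opening'' and then ``takes the central cell.'' But once Red has replied at $(2,3)$, it is Blue who moves. Reinterpreting the hypothesis as ``Blue's stones lie in column $1$'' does not settle whose turn it is, and ``for any play'' gives you no control over Blue's next move. Your routes do not survive that move. In the paper's coordinates, fixed by the definition of $X^+_c$ and the $3$-Center lemma, $(i,j)$ is adjacent to $(i-1,j),(i-1,j+1),(i,j\pm1),(i+1,j-1),(i+1,j)$. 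So both of your candidate pivots, $(3,2)$ and $(3,3)$, are adjacent to $(2,3)$, with edge carriers $\{(4,1),(4,2)\}$ and $\{(4,2),(4,3)\}$ respectively. A Blue stone at $(4,2)$ breaks both at once, and nothing in your proposal answers it.

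\textbf{The missing ingredient: the third-row edge template (ziggurat).} The stone $(2,3)$ is connected to the bottom through the carrier $\{(2,4),(3,2),(3,3),(3,4),(4,1),(4,2),(4,3),(4,4)\}$, which avoids $(1,1),(2,1),(3,1)$. Red's replies to Blue's next move are as follows.
\begin{itemize}
\item Answer $(3,2)$ or $(4,1)$ with $(3,3)$, which keeps the edge bridge $\{(4,2),(4,3)\}$.
\item Answer $(4,2)$ with $(3,4)$. This cell is bridged to $(2,3)$ through $\{(2,4),(3,3)\}$ and to the bottom through $\{(4,3),(4,4)\}$.
\item Answer every other move with $(3,2)$, which keeps the edge bridge $\{(4,1),(4,2)\}$.
\end{itemize}
After this single reply, pairing on disjoint two-cell carriers finishes the job, and this works with Blue to move, as the statement requires. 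If you intend the weaker reading in which Red is to move with the zone empty, you must say so explicitly. In that case, one move at $(3,2)$ plus the edge bridge $\{(4,1),(4,2)\}$ suffices.

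\textbf{Geometry corrections.}
\begin{itemize}
\item $(2,3)$ and $(3,2)$ are adjacent, so there is no bridge between them, and $\{(2,2),(3,3)\}$ is not a carrier you need.
\item $\{(4,2),(4,3)\}$ lies below $(3,3)$, not below $(3,2)$.
\item Your path ``of length $3$'' is listed with four cells.
\item The opening $(3,1)$ is not delicate. A pairing argument only cares whether an opponent stone lies \emph{in} a carrier, not whether it is adjacent to one.
\end{itemize}
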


\begin{lem}\label{Caixa3x3} 
Suppose a $3\times 3$ board in which the first player has possibly occupied all (or some of) the cells in the set $\{(1,1), (1,2), (2,1)\}$, but the Red player occupied the cell $(1,3)$. Then the unique next movement for the first player to obtain a winning path is choosing the cell $(3,2)$. That is, in any other choice the Red player winning the game. 
\end{lem}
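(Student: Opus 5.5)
The plan is a finite case check on the $3\times 3$ board in the coordinates of Lemma~\ref{l-center}. Cell $(i,j)$ lies in row $i$ and column $j$. Its neighbours are $(i,j\pm1)$, $(i\pm1,j)$, $(i+1,j-1)$ and $(i-1,j+1)$. Blue connects column $1$ to column $3$, Red connects row $1$ to row $3$, and Blue is to move. The argument has two directions: a Blue pairing strategy after $(3,2)$, and a Red winning reply to every other Blue move. Both directions are virtual connections, i.e.\ compositions of $BR$-blocks.

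\emph{Sufficiency of $(3,2)$.} The stone $(3,2)$ is adjacent to both $(2,3)$ and $(3,3)$, so $\{(2,3),(3,3)\}$ is a $BR$-block joining it to the right side. For the left side I would use $\{(3,1),(2,2)\}$:
\begin{itemize}
\item $(3,1)$ lies on the left edge;
\item $(2,2)$ is adjacent to $(2,1)$, which is already Blue, or else forms a second $BR$-block $\{(2,1),(1,2)\}$ towards $(1,1)$ and the left edge, using Blue's stones in $\{(1,1),(1,2),(2,1)\}$.
\end{itemize}
The blocks $\{(2,3),(3,3)\}$ and $\{(3,1),(2,2)\}$ are disjoint, and the secondary block only matters once $(2,2)$ is Blue. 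So a standard pairing response (Red plays in one block, Blue answers in its partner) gives a Blue chain, as in the proof of Lemma~\ref{l-center}.

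\emph{Necessity.} For each other empty cell, $(2,2)$, $(2,3)$, $(3,1)$, $(3,3)$ and any unoccupied cell of $\{(1,1),(1,2),(2,1)\}$, I exhibit a Red reply that gives Red a virtual connection from $(1,3)$ to row $3$. The key observation is that Red's stone $(1,3)$ already touches the top. It is adjacent to $(2,2)$ and $(2,3)$, and each of these reaches the bottom through a two-cell $BR$-block:
\[
(2,3)\to\{(3,2),(3,3)\},\qquad (2,2)\to\{(3,1),(3,2)\}.
\]
The cases are:
\begin{itemize}
\item If Blue plays $(2,2)$, Red takes $(3,2)$. Then $(3,2)$ is adjacent to row $3$ and connects to $(1,3)$ through $(2,3)$; I expect this to be forced by a bridge-type argument, and would check it by hand.
\item If Blue plays $(2,3)$ or $(3,3)$, Red takes $(3,2)$ or $(2,2)$ respectively, so that the triangle $(1,3),(2,2),(3,2)$ or a bridge closes.
\item If Blue plays $(3,1)$ or a cell of the top-left set, Red plays $(2,3)$. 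This creates the $BR$-block $\{(3,2),(3,3)\}$ to the bottom, with $(1,3)$–$(2,3)$ adjacent, so Red wins.
\end{itemize}

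The main obstacle is the quantifier ``possibly occupied all (or some of)''. The sufficiency argument as sketched uses $(2,1)$ or the block $\{(2,1),(1,2)\}$, so I would need to check that $(3,2)$ still wins when Blue owns few stones of the set. If it does not, the statement should be read with the minimal Blue configuration that actually occurs in the later application, presumably $(2,1)$ Blue. I would first check the worst case, where Blue owns none of the three cells. In that case Red answers $(3,2)$ with $(3,1)$, and I must verify that Blue's remaining route $(2,2)\to(2,1)/(1,2)\to(1,1)$ cannot be cut by Red.
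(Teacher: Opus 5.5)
Your overall structure matches the paper's: a Red virtual connection against every move other than $(3,2)$, and a Blue pairing after $(3,2)$. However, two of your Red replies fail. Your ``key observation'' is correct: $(1,3)$ touches $(2,2)$ and $(2,3)$, which reach row $3$ through $\{(3,1),(3,2)\}$ and $\{(3,2),(3,3)\}$. Your case list does not follow it, though. Against Blue $(2,2)$ you answer $(3,2)$, and against Blue $(2,3)$ (by your ``respectively'') you again answer $(3,2)$. In both cases the new Red stone is joined to $(1,3)$ through only one empty cell, $(2,3)$ resp.\ $(2,2)$.

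Blue takes that cell and then holds both $(2,2)$ and $(2,3)$. Here $(2,3)$ lies on the right edge, and $(2,2)$ is tied to the left edge by the $BR$-block $\{(2,1),(3,1)\}$ (or directly, if $(2,1)$ is Blue). Every Red route to row $3$ now needs both $(2,1)$ and $(3,1)$, so Blue wins. The correct replies follow directly from your observation and agree with the paper's:
\begin{itemize}
\item against $(2,2)$, $(3,1)$ or a top-left cell, Red plays $(2,3)$ with block $\{(3,2),(3,3)\}$;
\item against $(2,3)$, Red plays $(2,2)$ with block $\{(3,1),(3,2)\}$;
\item against $(3,3)$, either your $(2,2)$ or the paper's $(3,2)$ with block $\{(2,2),(2,3)\}$ works.
\end{itemize}

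Your worry about the quantifier is justified, and it has to be settled rather than deferred, because the worst case is false. Suppose Blue owns none of $(1,1),(1,2),(2,1)$. After $(3,2)$, Red plays $(3,1)$, threatening $(2,2)$, so Blue must take $(2,2)$. Red then plays $(2,1)$, threatening both $(1,1)$ and $(1,2)$. Blue's only route to the left edge is $(2,2),(1,2),(1,1)$, so Red wins. Combined with the replies above, Blue then has no winning move at all. The lemma therefore holds only if Blue owns at least one of the three cells; the paper's second-row/third-row pairing does not address this either.

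Under that hypothesis the pairing must be given case by case:
\begin{itemize}
\item $\{(2,3),(3,3)\}$ and $\{(2,2),(3,1)\}$ suffice if $(2,1)$ is Blue, or if both $(1,1)$ and $(1,2)$ are;
\item if only $(1,1)$ is Blue, add $\{(1,2),(2,1)\}$;
\item if only $(1,2)$ is Blue, add $\{(1,1),(2,1)\}$. Your block $\{(2,1),(1,2)\}$ is wrong in this case, since $(1,2)$ is already Blue.
\end{itemize}
This third pair must be honoured from the first move, not only once $(2,2)$ is Blue. For example, with only $(1,1)$ Blue, if Red opens $(1,2)$ and Blue replies $(3,3)$, then Red $(3,1)$ threatens both $(2,1)$ and $(2,2)$ and wins.
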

{\textit Proof.} If Blue chooses $(2,2)$ or $(3,1)$, then red take $(2,3)$ and the $BR$-block [(3,2)(3,3)]. If Blue takes $(2,3)$ then red chooses $(2,2)$ and the $BR$-block [(3,1)(3,2)]. If Blue moves to $(3,3)$, then red moves to $(3,2)$ and use the $BR$-block $[(2,2)(2,3)]$. In any of the preceding cases, red connects the top with the bottom destroying any connected path from the left to the right. The remaining choice $(3,2)$ for Blue prevents any red winning connection because whether red plays in the second row, blue take the corresponding stone but in the third row, and conversely. Since Red has no winning path, by the Hex Theorem, Blue win the game. \hspace{7,5cm} \qquad$\blacksquare$

\begin{lem}[{\bf $5$-Center Strategy}] Any Strategy in the $5\times 5$ board iniciating in the center define a winning path of length at least $7$.
\end{lem}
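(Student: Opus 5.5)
The plan is to exhibit, after Blue opens at the center $(3,3)$, a Red strategy that forces every Blue winning path to contain at least $7$ stones. Since $\lambda(5)\le 7$ is already known (Campbell), only the lower bound has to be proved, and this is done by giving Red an explicit reply scheme. Blue connects left to right, so a winning path must meet every column, and hence has at least $5$ stones. To reach length $7$ we must force at least two ``detours'': columns (or column transitions) in which the path is made to use two cells rather than one.

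The first step is Red's reply. By the symmetry of the board under the $180^\circ$ rotation about $(3,3)$, we may restrict to a small number of replies. I would take Red's answer adjacent to the center, on the short diagonal, say $(2,4)$ or $(4,2)$, chosen so that its $BR$-blocks to the top and bottom are already established. Red then aims to build a barrier consisting of Red stones and $BR$-blocks. Any Blue path crossing this barrier must go around it, and going around costs an extra cell.

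The second step is a case analysis on Blue's second move, again reduced by symmetry. In each case Red answers so as to split the board into a left $3\times 3$-type region and a right region. Red wants to reach one of two configurations. The first is one where Lemma~\ref{Caixa3x3} applies in a corner $3\times3$ box, so that Blue's only continuation is a fixed cell that is not in line with the center, which forces a detour. The second is one where Lemma~\ref{X-zone}, applied to a $4\times4$ sub-board, gives Red a virtual connection of $(2,3)$-type cells to an edge, which cuts off the straight route. Counting along the path, the center contributes one stone, each side must then be reached through at least three stones, and the forced detours give $1+3+3=7$.

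The final step is to check, for each Blue deviation, that the Red barrier survives. This relies on the pairing property of $BR$-blocks: a Blue intrusion into a block is answered in the other cell of the same block. It also relies on the fact that the blocks used are disjoint, which makes Red's responses independent, as in Lemma~\ref{l-center}.

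The main obstacle is the second step. It must be made exhaustive: every second Blue move, including moves that attack Red's blocks or build Blue bridges like $X^{\pm}_c$, has to be handled so that the path cannot shortcut to $6$ stones. I expect this to require careful bookkeeping of about six symmetry classes. The delicate cases are those in which Blue plays a bridge from the center toward a corner, because there the $6$-stone path is geometrically possible and only Red's timing prevents it.
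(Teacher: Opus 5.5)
There is a genuine gap. The exhaustive Red strategy, which is the entire content of the lemma, is only announced, and the one concrete choice you commit to fails.

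First, neither $(4,2)$ nor $(2,4)$ has $BR$-blocks to both Red edges: $(4,2)$ has $\{(5,1),(5,2)\}$ toward row $5$ and nothing comparable toward row $1$. Second, $r_1=(4,2)$ is refuted directly. Blue answers $b_2=(3,2)$, which is adjacent to $(3,3)$ and reaches column $1$ through the $BR$-block $\{(3,1),(4,1)\}$, so the left side costs two stones. Red then has one free move on the right; intrusions into $\{(3,1),(4,1)\}$ are simply answered. Blue has three right-hand options:
\begin{itemize}
\item $(3,4)$ with carrier $\{(2,5),(3,5)\}$;
\item $(2,4)$ with carrier $\{(1,5),(2,5)\}$;
\item $(4,4)$, bridged to $(3,3)$ through $\{(3,4),(4,3)\}$ and to column $5$ through $\{(3,5),(4,5)\}$.
\end{itemize}
Their cell sets have empty common intersection. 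So Blue always completes a right connection of at most three stones, disjoint from the left carrier: after $r_2=(2,5)$ Blue plays $(4,4)$, otherwise $(3,4)$ or $(2,4)$. Blue therefore wins with at most $1+2+3=6$ stones. The $180^\circ$ rotation of this line refutes $r_1=(2,4)$. That rotation is the only symmetry preserving Blue's edges, and it moves $(4,2)$, so there is also no reduction of $b_2$ to a few classes.

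The deeper problem is your target $1+3+3$, which no reply can force. The two-stone routes from $(3,3)$ to column $1$ are $\{(3,2),(3,1)\}$, $\{(3,2),(4,1)\}$, $\{(4,2),(4,1)\}$ and $\{(4,2),(5,1)\}$. The only cell whose occupation by Red stops Blue, moving first on that side, from securing one of these routes is $(4,1)$. On the right the only such cell is $(2,5)$. Since $r_1$ cannot be both, $b_2$ always secures two stones on some side, so Red must force a $2+4$ split instead.

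The intersection argument above also shows that every $r_1\notin\{(4,1),(2,5)\}$ loses outright. A correct proof must therefore:
\begin{itemize}
\item take $r_1=(4,1)$ (or symmetrically $(2,5)$);
\item show that, once Blue secures the right side, Red's next move on the left makes the left cost four (the natural candidate is $(2,2)$);
\item handle Blue's forcing moves that alternate between the two sides.
\end{itemize}
Your count also assumes the winning path passes through $(3,3)$. Five-stone paths avoiding the centre, such as row $2$ or $(5,1),(5,2),(4,3),(3,4),(3,5)$, must be excluded as well.

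You never say which positions satisfy the hypotheses of Lemmas~\ref{Caixa3x3} and~\ref{X-zone}, which concern $3\times3$ and $4\times4$ boards with prescribed occupied cells. Nor do you say how a win on such a sub-board transfers to the $5\times5$ board.

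Finally, the paper's own argument splits the board into the halves $X_1$, $X_2$ and blocks the two-stone route in one of them. It likewise concludes with at least three stones in each half, so it is exposed to the same tempo objection and cannot simply be copied to close the gap.
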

{\textit Proof.} We first define a natural strategy $S$ when $b_1=(3,3)$. We divide the remaining possible movements for Red into two sets, $X_1=\{(i,j):1\leq i\leq5, 1\leq j\leq 2\}\cup \{(3,1), (3,2)\}$ and $X_2=\{(i,j):1\leq i\leq 5, 4\leq j\leq5\}\cup\{(4,3)(5,3)\}$. The game is independent when playing in either the set $X_1$ or $X_2$. By the rotational symmetry of the game, we only explain the strategy when the game run in $X_1$. If $r_1=(4,1)$, then $b_2=(2,2)$ and are formed the $BR$-blocks $[(3,2)(2,3)]$. Note that in this case, the path connecting the center to the left right has $4$ stones. If $r_1\in\{(3,1),(3,2)\}$, then blue take the cell $(4,2)$ and use the $BR$-block [(4,1)(5,1)] creating a path from the center to the left with only $3$ stones. If $r_1\not\in\{(3,1)(3,2)(4,1)\}$ then $b_2=(3,2)$ and [(3,1)(4,1)] is a $BR$-block. For this strategy we see that $\lambda(S)=\delta(S)=7$ and is only formed when the the first movement for red in $X_1$ (or $X_2$) is $(4,1)$ (or $(2,5)$), because the only case to obtain a path of lenght $4$ from the center to the left (right) side.\\

Now suppose that $S'$ is a strategy openning in the center forming a winning path with at most $6$ stones. If we denote by $\gamma$ this winning path, then we have that $\# X_i\cap \gamma=2$ for some $i=1,2$. Without lost of generality, we assume that $\# X_1\cap \gamma=2$, then $b_2,b_3\in\{(3,1),(3,2)\}$ or $b_2,b_3\in\{(5,1),(4,2)\}$. But in both cases red can block the path by chossing $r_2=b_3$, then $\#\gamma\cap X_1,\#\gamma\cap X_2\geq3$ and hence $|\gamma|\geq 7$. The proof is now complete.\hspace{7,5cm} \qquad$\blacksquare$\\

 \section{Proof of Theorem \ref{t-main}}\label{s-proof-teoA}
In this section we prove the Theorem \ref{t-main}. Note first that if $\gamma$ is a winning path in a $n\times n$ board, then its lenght is
$$
|\gamma|=\sum_{j=1}^n \#(\gamma\cap C_j)
$$
where $C_j$ is the $jth$ column of the board. Then, to prove Theorem \ref{t-main} we need to show that $\gamma$ intersects some column in at least three stones or $\gamma$ intersects two (or more) columns at least two times.

We start by considering  the collection $\mathfrak{S}(5)$ of all the strategies in the $5\times5$ Hex board. We need to prove that for any winning strategy $S\in\mathfrak{S}(5)$ the winning path formed by this strategy has a length of at least $7$. 
In other words, we shall prove that for any strategy $S$ contain a game with a winning path of length greater or equal to $7$. Since the first player has always a winning stragegy, the proof consists of defining strategies for the second player in order to prevent any winning path with short length. We shall examine each strategy depending on its opening move. We then divide the collection $\mathfrak{S}(5)$ into four categories depending on this first movement: Denote by $\mathfrak{S}_C(5)$ the strategies starting at the center of the board, that is, starting in $(3,3)$. Put $\mathfrak{S}^+(5)$ the collection of all the strategies with opening move in the cell $C(i,j)$, where $1\leq i,j\leq3$ but $(i,j)\neq (3,3)$. We also write $\mathfrak{S}^-(5)$ the collection of the strategies iniciating in $C(i,j)$ with $4\leq i\leq 5$ and $1\leq j\leq 2$. Finally, $\mathfrak{S}_0(5)$ stand for the remaining possibilities. See Figure \ref{f-ThmA-Categorias}.

\begin{figure}[h]
\begin{center}
\includegraphics[width=6.0cm,height=4.0cm,angle=0]{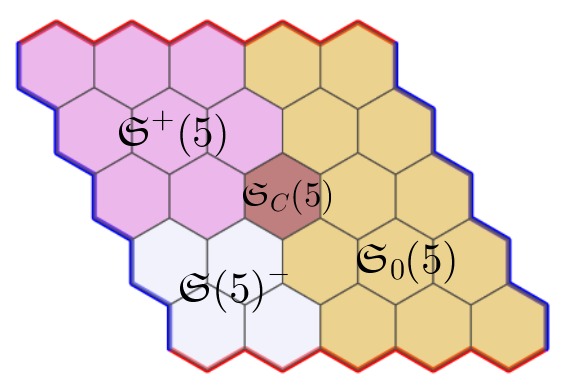}
\caption{Categories are divided depending on the opening move of a strategy.}
\label{f-ThmA-Categorias}
\end{center}
\end{figure}

Now we construct a game such that the winning path obtained after using a given strategy $S$ has length at least $7$. In what follows, we understand Blue and Red as first and second player, respectively. 

{\bf Category $\mathfrak{S}_C(5)$.} Any strategy in this category has a length of at least $7$ by the $5$-Center Strategy Lemma.\\

{\bf Category $\mathfrak{S}^+(5)$.} Denote by $b_1$ the opening move of a fixed strategy $S\in \mathfrak{S}^+(5)$, adopted by the first player (Blue). In this case, Red can prevent any strategy passing completely in the sub-board $1\leq i\leq 3$ by considering the $c=r_1(2,4)$ cell. At the same time, this choice define the region $X_c^{-}$ in the inferior part of the board. We continuing our analysis depending on the second choice $b_2$ for Blue.\\

{\bf I.} If $b_2$ is outside $X_c^{-}$, then Red win the game playing within the set $X_c^{-}$ assuring a winning path through this set by Lemma \ref{X-zone}.
\vspace{0.2cm}

{\bf II.} If $b_2=(3,4)$ then Red can take $r_2=(3,3)$ and then, using Lemma \ref{Caixa3x3} in the inferior $3\times 3$-sub-board, the following sucession is mandatory for Blue considering strategical choices for Red:
$$
b_1cb_2r_2\rightarrow b_3(5,2)\rightarrow r_3(4,3)\rightarrow b_4(5,3)\rightarrow r_4(5,1)\rightarrow b_5(4,2)\rightarrow r_5(4,1)\rightarrow...
$$    
The above selection says that any winning path for $S$ contain at least $3$ stones in the second column. This shows that $\lambda(S)\geq7$. 
\vspace{0.2cm}

{\bf III.} If $b_2\in X_c^{-}\setminus\{(3,4),(5,3)\}$, then Red takes $r_2=(3,5)$ and the following sequence is mandatory for Blue:
$$
b_1cb_2r_2\rightarrow b_3(5,4)\rightarrow r_3(4,5)\rightarrow b_4(5,5)\rightarrow r_4(5,3)\rightarrow b_5(4,4)\rightarrow r_5(5,1)\rightarrow...
$$  
The above sequence only allows winning path for blue starting in $(i,1)$, $i=1,2,3$ and finishing in $(5,5)$. This implies that any (essential) winning path for Blue contain intersects two columns with at least two cells. Hence $\lambda(S)\geq7$.
\vspace{0.2cm}

{\bf IV.} If $b_2=(5,3)$, Red takes $r_2=(3,5)$ and the idea follows the same above argument with the only difference that now Blue has two different movement (namely $(4,4)$ and $(5,4)$). So any possible winning path for Blue starts in $(i,1)$, $i=1,2,3$ and finishes in $(5,5)$ showing that $\lambda(S)\geq7$.
\vspace{0.2cm}

{\bf Category $\mathfrak{S}^-(5)$.}  To illustrate the idea of the proof, we divide the board in three regions. The first ($R_1$) formed by the cells $(i,j)$ with $1\leq i\leq 5$ and $1\leq j\leq 2$. The second region ($R_2$) consisting of cells $(i,j)$ with $1\leq i \leq 2$ and $3\leq j\leq 5$. And the final region ($R_3$) is composed of cells $(i,j)$ with $3\leq i,j\leq5$. See Figure \ref{f-regiones}.

\begin{figure}[h]
\begin{center}
\includegraphics[width=9.0cm,height=3.0cm,angle=0]{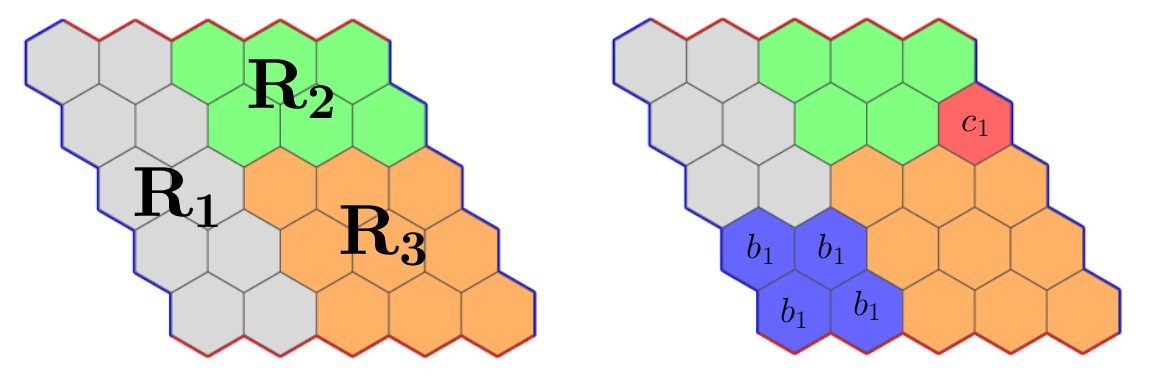}
\caption{The regions $R_1$, $R_2$ and $R_3$.}
\label{f-regiones}
\end{center}
\end{figure}
If $b_1$ denotes the first movement of a fixed strategy $S\in \mathfrak{S}^-(5)$, adopted by the first player (Blue), Red takes the place $c=r_1(2,5)$. The strategy for a good game for Red is pay attention primarily on the third region where is well define the set $X_c^{-}$. Now our analysis focuses on the second and third movement of the first player.
\vspace{0.2cm}

{\bf I.} We start analising the first region. We assume that $b_2\in R_1$. In this case, Red may take $r_2=(4,4)\in X_c^{-}$, and win the game creating the following winning path (after particular choices of Red and disregarding the order as the $BR$-blocks were completed). For instance, \\
$$
b_1r_1b_2r_2\rightarrow b_3(1,5)\rightarrow r_3(2,3)\rightarrow b_4(2,4)\rightarrow r_4(3,3)\rightarrow BR-\mbox{blocks}
$$
representing a maximal winning path for Red whenever $b_2\in R_1$.
 \vspace{0.2cm}
      
{\bf II.} $b_2\in R_2$. If $b_2\neq (1,5)$, then $r_2(1,5)$ and the game runs in the region $R_3$.\\
If $b=(1,5)$, then Red may take $r_2=(2,4)$. 
\vspace{0.2cm}

Now, we anylise what happens if the game take place only in the rows $1$ and $2$. We shall use in the other cases, so we give a name for this situation.
\vspace{0.2cm}

{\bf Strategy $A_1$.} (Assuming $b_1\neq(4,1)$). Suppose that $b_2=(1,5)$, $r_2=(2,4)$ and the cells $(1,1)$ and $(1,2)$ form a $BR$-block. Assume that the game runs only in the rows $1$ and $2$. Then consider the sequence
$$
b_1r_1b_2r_2\rightarrow b_3(1,4)\rightarrow r_3(2,3)\rightarrow b_4(1,3)\rightarrow r_4(2,1)\rightarrow b_5(2,2)\rightarrow r_5(3,1)\rightarrow b_6(3,2)\rightarrow r_6(4,1)\rightarrow BR-\mbox{block}.
$$

\begin{figure}[h]
\begin{center}
\includegraphics[width=9.0cm,height=3.0cm,angle=0]{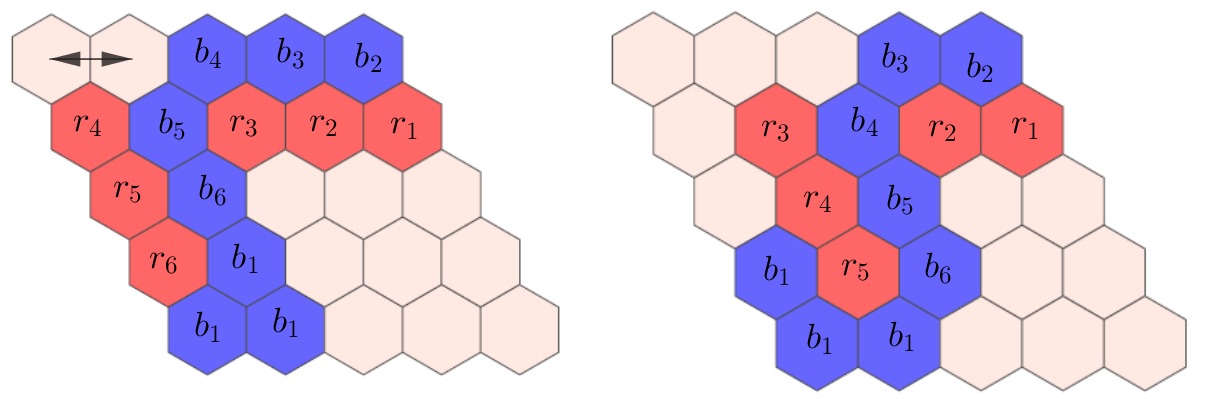}
\caption{The winning path obtained by the Strategy $A_1$ (left) and Strategy $A_2$ (right).}
\label{f-StrategyA1}
\end{center}
\end{figure}

{\bf Strategy $A_2$.} (Assuming $b_1\neq(4,2)$). Suppose that $b_2=(1,5)$, $r_2=(2,4)$ and the cells $(1,1)$ and $(1,2)$ form a $BR$-block. Assume that the game runs only in the rows $1$ and $2$. Then consider the sequence
$$
b_1r_1b_2r_2\rightarrow b_3(1,4)\rightarrow r_3(2,2)\rightarrow b_4(2,3)\rightarrow r_4(3,2)\rightarrow b_5(3,3)\rightarrow r_5(4,2)\rightarrow b_6(4,3)\rightarrow BR-\mbox{block}.
$$

It is clear that if $S$ is either the strategy $A_1$ or $A_2$, then $\lambda(S)\geq7$.

If Blue does not obey the strategies above, then Red cut the path by connecting with the top of the board. Figure \ref{f-StrategyA1} displays the winning path obtained by Strategy $A_1$ and $A_2$ respectively.
\vspace{0.2cm}

{\bf III.} $b_2\in R_3$. If $b_2\not\in X_c^{-}$, then Red can take $r_2(4,4)$ and no winning path is connected with the stones $(2,5), (3,5), (4,5)$,  nor $(5,5)$, obligating to play like in Strategies $A_1$ or $A_2$.\\

Hence we need to consider the cases whenever $b_2\in X_c^{-}$. We have 3 sub-cases.\\
$\bullet$ $b\in X_c^{-}\setminus\{(3,4), (5,3)\}$. In this case, Red chooses $r_2=(4,3)$ and the cells $(3,3)$ and $(3,4)$ form a $BR$-block. 
If $b_3$ is in this $BR$-block, then Red consider the pairs $(2,3)-(2,4)$ and $(1,4)-(1,5)$ as $BR$-blocks and then there are only two possibilities: or Red connected $r_2$ with the top avoiding any winning path finishing in $(1,5), (2,5)$ or the game runs like the above strategies $A_1$ or $A_2$. If $b_3=(5,3)$, Red consider the following strategy:
\vspace{0.2cm}

{\bf Strategy C.} pairs $(5,1)-(5,2)$, $(4,1)-(4,2)$ and $(3,1)-(3,2)$ like $BR$-blocks. In any case, we can observe directelly that $\lambda(S)\geq7$, see Figure \ref{f-StrategyC1}.\\

\begin{figure}[h]
\begin{center}
\includegraphics[width=9.1cm,height=3.1cm,angle=0]{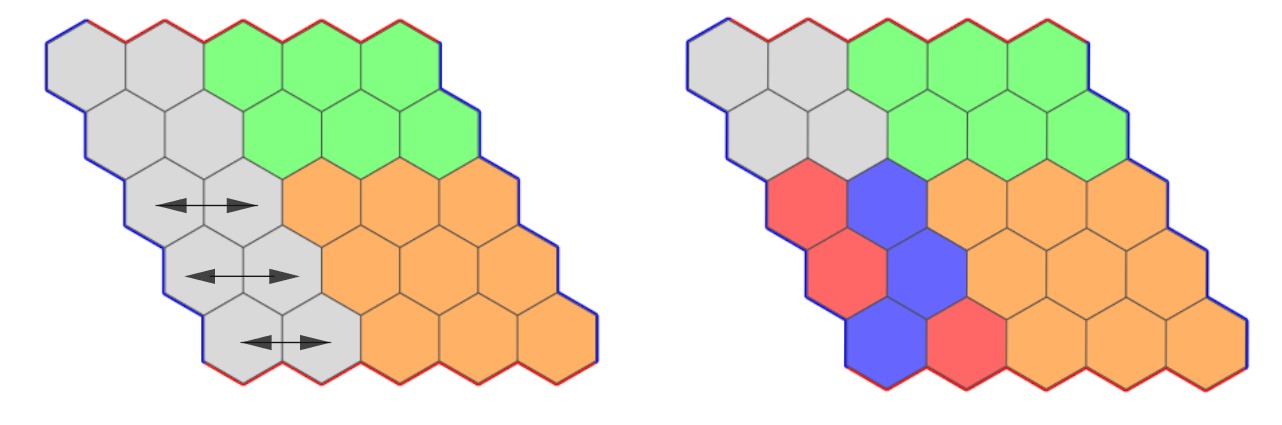}
\caption{Strategy C (left) and a particular case (right).}
\label{f-StrategyC1}
\end{center}
\end{figure}

If $b_2=(5,3)$, then Red take $r_2=(4,4)$. This movement create a $BR$-block formed by the cells $(3,4)$ and $(3,5)$. The Blue must take the cell $b_3=(5,4)$ and follows the Strategy C or to play in the region $R_2$ following the strategy $A_1$ or $A_2$. In any case, a winning path has or $3$ stones in a column or it has two column with $2$ stones.
\vspace{0.2cm}

{\bf IV.} If $b_2=(3,4)$, then Red consider the $BR$-block $(4,3)-(4,4)$ and the next movements are necessary for Blue after the strategical choice of Red
$$
b_1r_1b_2\rightarrow r_2(3,5)\rightarrow b_3(5,4)\rightarrow r_3(4,5)\rightarrow b_4(5,5)\rightarrow r_4(5,3)\rightarrow ...
$$
Then a winning path for Blue contain $3$ cells in the column $4$ or the game runs in the region $R_2$ following strategies $A_1$ or $A_2$. Hence $\lambda(5)\geq7$.\\ 
\vspace{0.2cm}

{\bf Category $\mathfrak{S}_0(5)$.} By the rotational symmetry of the board, any strategy in $\mathfrak{S}_0(5)$ has a symmetry strategy in one of the above categories.

In summary, since for any strategy $S\in \mathfrak{S}(5)$ was prove that $\lambda(S)\geq7$ we can conclude that $\lambda(5)\geq7$ and therefore $\lambda(5)=7$ as desired.\qquad \hspace{9,5cm} $\blacksquare$

\vspace{0.2cm}

\begin{athm} 
(a) There are no winning strategy for Blue starting in the set $$\{(2,1), (3,1), (4,1), (3,2), (5,2), (5,3), (5,4), (4,3)\}.$$
(b) There are no winning strategy in $\mathfrak{S}^+(5)$ if the second choice for Blue is $(2,5)$ or $(3,5)$.
\end{athm}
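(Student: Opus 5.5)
The plan is to prove both parts by exhibiting explicit \emph{Red} winning strategies, built from disjoint $BR$-blocks and the templates $X^{\pm}_c$ of Section~\ref{s-BR-blocks}. By the Hex Theorem, a Red top--bottom connection excludes every Blue winning path. The basic tool is a pairing argument. If Red owns a chain of cells $r=c_0,c_1,\dots,c_m$ joining the top to the bottom, and any two consecutive links are either adjacent or joined by a $BR$-block, and these $BR$-blocks and the zones $X^{\pm}_c$ are pairwise disjoint, then Red wins. He simply answers every Blue intrusion in a block or zone with the partner cell, and the local lemmas (Lemma~\ref{X-zone} and Lemma~\ref{Caixa3x3}) tell him what to do inside each zone. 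So each case reduces to writing down a Red reply and checking that the associated carriers do not overlap. The orientation is the one used throughout Section~\ref{s-proof-teoA}: Blue joins columns $1$ and $5$, Red joins rows $1$ and $5$.

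For part (a) I treat the eight openings one at a time. The set is not invariant under the rotation $(i,j)\mapsto(6-i,6-j)$, so symmetry only helps to reuse the local patterns, not to merge cases.
\begin{itemize}
\item For the edge and near-edge openings $(2,1),(3,1),(4,1)$ and $(3,2)$, I would let Red answer in the central column, say $r_1=(3,3)$ or $(2,4)$. This creates a $BR$-block to the top, namely $(1,4)$--$(1,5)$ from $(2,4)$, together with a zone $X^{-}_{r_1}$ to the bottom. By Lemma~\ref{X-zone}, that zone is still free because Blue's stone sits in column $1$ or $2$ outside it.
\item For the bottom-row openings $(5,2),(5,3),(5,4)$ and for $(4,3)$, the natural Red reply is one that blocks the short bottom template, for example $r_1=(4,3)$ or $(4,4)$ adjacent to $b_1$. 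Red then connects to the bottom through a $BR$-block that avoids $b_1$, and to the top through the mirrored zone $X^{+}$.
\end{itemize}
In each case I would list the blocks explicitly and verify disjointness from $b_1$.

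For part (b), let $b_1\in\mathfrak S^+(5)$ and let Red answer $r_1=c=(2,4)$, as in the analysis of $\mathfrak S^+(5)$. The cells $(2,5)$ and $(3,5)$ lie outside $X^{-}_c$, so if $b_2$ is either of them we are in Case~I. Red then reaches the bottom from $c$ within $X^{-}_c$ by Lemma~\ref{X-zone}. Red reaches the top through the $BR$-block $(1,4)$--$(1,5)$, both of whose cells neighbour $(2,4)$. This block is disjoint from $X^{-}_c$, and it is untouched by $b_1$ (which lies in columns $1$--$3$) and by $b_2$. The pairing argument then gives a Red win, so no Blue strategy survives.

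The main obstacle is part (a): for a few openings, notably $(4,3)$ and $(3,2)$, Blue's stone sits close to the centre. There a single reply may not produce disjoint carriers, and Red may have to use a second-level case split on $b_2$ in the style of Cases~II--IV above. For those I would first try the reply adjacent to $b_1$ on the side facing the short Blue edge. I would then verify by hand, using Lemma~\ref{Caixa3x3} in the relevant $3\times3$ corner, that each Blue intrusion can be answered.
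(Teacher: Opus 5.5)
The paper states this theorem without proof, so your argument has to stand on its own. Part (b) essentially does. Take $b_1\in\{1,2,3\}^2\setminus\{(3,3)\}$, $r_1=(2,4)$ and $b_2\in\{(2,5),(3,5)\}$, and let Red play $r_2=(4,3)$. Then the pairs $\{(1,4),(1,5)\}$, $\{(3,3),(3,4)\}$ and $\{(5,2),(5,3)\}$ are disjoint and avoid $b_1$ and $b_2$; the only cell of $X^-_c$ lying in rows and columns $1$--$3$ is the excluded $(3,3)$. State this second Red move explicitly rather than invoking Lemma~\ref{X-zone}, whose $4\times4$ frame does not match this configuration.

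Part (a), however, is not proved, and the mechanism you propose cannot work as stated.

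\begin{itemize}
\item A zone $X^{\pm}_c$ is not a connection. $X^-_{(2,4)}$ joins $(2,4)$ to row $5$ only after Red also owns $(4,3)$. After your single reply $r_1$ it is Blue's turn, and $b_2\in X^-_{r_1}$ (say $(4,3)$ or $(3,3)$) destroys the pairing before it starts. This is exactly why the paper's treatment of $\mathfrak S^+(5)$ needs Cases II--IV.
\item More generally, with BR-blocks and edge pairs alone a single Red stone can never be tied to both rows $1$ and $5$. An edge pair reaches the top only from row $2$ and the bottom only from row $4$, and bridges need a second Red stone. So no ``one reply, then pair'' argument exists with the tools you cite.
\item Your bottom-row replies also fail locally. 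For $b_1=(5,3)$, both $(4,3)$ and $(4,4)$ have $(5,3)$ in their bottom edge pair. The ``mirrored $X^+$'' again requires a second Red stone before Blue intervenes.
\end{itemize}

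For $(2,1),(3,1),(4,1)$ the idea can be repaired with a stronger template. Answer $r_1=(3,3)$ and use the two third-row edge templates (ziggurats), with carriers $\{(1,2),(1,3),(1,4),(1,5),(2,2),(2,3),(2,4),(3,2)\}$ and $\{(3,4),(4,2),(4,3),(4,4),(5,1),(5,2),(5,3),(5,4)\}$. These are disjoint and avoid those three cells.

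The remaining five openings are not covered by this repair. $(3,2)$ lies in the first carrier, and $(4,3),(5,2),(5,3),(5,4)$ lie in the second. The centre is the only cell whose ziggurats to both edges fit on the $5\times5$ board, and each fits in one orientation only. So for these five openings the claim rests entirely on a real game-tree analysis over $b_2$ and later moves, which your proposal defers to ``verify by hand''. As written, (a) remains unproved for these five cells.
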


\begin{athm} $$\delta(5)=7.$$
\end{athm}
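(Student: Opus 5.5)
The plan is to sandwich $\delta(5)$ between two bounds that are already available. For the lower bound I will use Proposition~\ref{prop:lambda-delta}: every winning set contains a winning path, so $\lambda(5)\le\delta(5)$. Theorem~\ref{t-main}, proved in the preceding section, gives $\lambda(5)=7$, and hence $\delta(5)\ge 7$. This direction needs no further work. It only uses the fact that the lower-bound argument for $\lambda(5)$ was carried out against every strategy in $\mathfrak{S}(5)$, through the four categories $\mathfrak{S}_C(5)$, $\mathfrak{S}^+(5)$, $\mathfrak{S}^-(5)$ and $\mathfrak{S}_0(5)$.

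For the upper bound $\delta(5)\le 7$, one option is simply to quote Campbell's bound $\delta(5)\le 7$ from \cite{Ca04}, recalled in the preliminaries. To keep the paper self-contained, I would instead exhibit the center strategy $S$ from the $5$-Center Strategy Lemma and check directly that $\delta(S)\le 7$. Blue opens at $b_1=(3,3)$. The rest of the board splits into the two regions $X_1$ and $X_2$, and play in them is independent. In each region Blue answers Red's first move there using the rule from the lemma:
\begin{itemize}
\item if Red plays $(4,1)$, Blue plays $(2,2)$ and uses two $BR$-blocks;
\item if Red plays $(3,1)$ or $(3,2)$, Blue plays $(4,2)$ and uses the $BR$-block $[(4,1)(5,1)]$;
\item otherwise Blue plays $(3,2)$ and uses the $BR$-block $[(3,1)(4,1)]$.
\end{itemize}
After that, Blue answers every intrusion into a $BR$-block by taking its partner cell. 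The key point is to count the stones Blue places. In each region Blue adds at most three stones: one reply plus at most two $BR$-block completions in the worst case, and fewer in the other branches. Together with $b_1$, this bounds the winning set by $1+3+3=7$.

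One point needs care, and I expect it to be the main obstacle. The definition of a winning set counts every Blue stone played, so I must make sure that Blue never spends a move outside the pairing scheme. This happens when Red plays a cell that belongs to no $BR$-block and no reply rule, and Blue still has to move. I would handle it in one of two ways. The first is to use the convention, implicit in Campbell's definition, that the game stops as soon as Blue's connection is virtually secured, so that only the stones needed to realize the connection are counted. The second is to let Blue respond only to threatening Red moves and argue that non-threatening Red moves do not force a Blue stone in the count. With either convention, each region contributes at most three essential stones, which gives $\delta(S)\le 7$ and therefore $\delta(5)=7$.
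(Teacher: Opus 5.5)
Your lower bound, and the first option you mention for the upper bound, coincide with the paper's proof. That proof is the one-line chain $7=\lambda(5)\le\delta(5)\le 7$, using Theorem~\ref{t-main}, Proposition~\ref{prop:lambda-delta} and Campbell's bound $\delta(5)\le 7$ from \cite{Ca04}. Had you stopped there, nothing would be missing.

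The self-contained route you commit to instead has a genuine gap, exactly at the point you flag. Neither of your conventions is available:
\begin{itemize}
\item Stopping once the connection is virtually secured, and counting only the stones of the realized chain, counts the cells of a path. That bounds $\lambda(S)$, not $\delta(S)$. Under the paper's definition, the winning set consists of every stone Blue plays before the win; this is also how $\delta$ is used in the later comparison with $\lambda(5\times\infty)$.
\item Blue cannot decline to answer a harmless Red move. There is no passing, and whatever stone she places joins the winning set.
\end{itemize}

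The missing idea is that Blue never has to waste a move. Call a region \emph{started} once Blue has a stone in it. Blue then plays as follows:
\begin{itemize}
\item If Red plays in an unstarted region, Blue makes the first reply given by your three rules (mirrored by $(i,j)\mapsto(6-i,6-j)$ in $X_2$). That region is otherwise empty at this point, so the reply is legal and its carriers are free.
\item If Red enters a pending $BR$-block, Blue takes the partner cell.
\item Otherwise Blue fills a cell of any pending $BR$-block. If there is none, she starts the untouched region with $(3,2)$ (resp.\ $(3,4)$), whose carrier is then free. If neither option exists, she has already won.
\end{itemize}
With this rule every Blue stone is a cell of the final chain, which has at most $1+3+3=7$ cells. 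So $\delta(S)\le 7$ holds for the paper's definition.

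You also need to repair $X_1$. As literally written it is just columns $1$--$2$, since the extra union with $\{(3,1),(3,2)\}$ is redundant. So $(1,3)$ and $(2,3)$ lie in neither region, and the regions do not split the rest of the board. This matters because $(2,3)$ lies in the carrier $\{(2,3),(3,2)\}$ of the $(4,1)$ branch. Suppose Red plays $(2,3)$ early, Blue treats it as a free move, and Red later plays $(4,1)$: Blue's reply $(2,2)$ then has a broken carrier. The two halves are independent only once $(1,3),(2,3)$ are placed in $X_1$.
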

{\it Proof.} Follows from the fact that $7=\lambda(5)\leq \delta(5)\leq7.\qquad \hspace{9.5cm} \blacksquare$

\section{The case $5 \times \infty$}\label{s-infty}

We now restrict our attention to an unlimited Hex board. We consider a $n\times\infty$ Hex board $H(5\times\infty)$ where, obviously, the player which was assignated to the finite size always has a winning strategy.   

In the $5\times\infty$ Hex board, we define similar optimal quantifier as in a regular board.

  \begin{defn} The quantity $\lambda(n\times\infty)$ is the smallest integer $k$ such that the winning player has a strategy guaranteeing that every resulting winning path in the $n\times \infty$ Hex board contains at most $k$ stones. 
  \end{defn} 

and also

   \begin{defn} The quantity $\delta(n \times \infty)$ is the minimum, over all winning strategies for the winning player in $H(5\times\infty)$, of the maximum cardinality of a winning set produced against any possible response of the other player. \end{defn} 
   
   Since any strategy $S$ defined in $H(n)$ prescribes a move for Blue after
any admissible sequence of Red replies inside the $n\times n$ window, Blue can simulate $S$ on $H(5\times\infty)$ as follows: whenever Red plays inside the window, Blue answers according to $S$; whenever Red plays outside the window, Blue treats this move as if Red had instead occupied an arbitrary free cell inside the window  and continues to respond according to $S$, while Red's actual stone outside the window plays no role in the connection Blue is building. Since this can only postpone Red's interference with the window, Blue completes the winning path prescribed by $S$ in the window in at most as many moves as on $H(n)$, so one may deduce that
$$
\lambda(n\times\infty)\leq \lambda(n)\qquad\mbox{and}\qquad\delta(n\times\infty)\leq\delta(n),\qquad \mbox{ for any }n\geq1.
$$

Hence, to prove Theorem \ref{t-main2}, it suffices to exhibit a strategy S for Blue that guarantees each winning path to have length at most $5.$
 Our proof achieves this by exploiting a small collection of recurring {\em tactical patterns}. For convenience, we label them Patterns OP, EZ and SB. 
 Each pattern captures a local board configuration together with the corresponding forcing argument. 
 
 We now define the strategy $\mathcal S$ on the $(5\times\infty)$-board and show how these patterns are repeatedly used in its construction. 
 By construction, $\lambda(\mathcal S)=5$. Observe that if $b=(4,j)$ is a cell in the fourth row in a $5\times\infty$-board, then it defines a set $X_b$ consisting of cells $(3,j),(1,j+1),(2,j+1),(3,j+1),(1,j+2)$. This set appears because the points $b$ and $(2,2)$ create a winning virtual connection at $b$. This means that if $r_1\not\in X_b$ then Blue player wins the game defining $b_2=(2,j+1)$.    

We define the first movement of blue as $b_1=(4,1)$ (or any $(4,j)$ because its choice will does not depend on the column coordinate).
For this cell we look at $X_{b_1}$. If $r_1\not\in X_{b_1}$ the we define $b_2$ in our strategy $S$ by $b_2=(2,2)$ and Blue wins the game. Obviously the winning path has length $5$.\\

It remains to define the strategy $\mathcal{S}$ when $r_1$ belongs to $X_{b_1}$. We need to split the set $X_{b_1}$ into two classes and to analyze each class separately. We define the subsets of $X_{b_1}$, 

$\mathcal{O}_{b_1}^+=\{(3,1),(1,2),(2,2)\}$ and we say that $r_1$ is {\bf open to the right} (or {\bf close to the left}). \\

$\mathcal{O}_{b_1}^-=\{(2,2),(3,2),(1,3)\}$ and  $r_1$ is said to be {\bf open to the left} (or {\bf close to the right}).\\

The point $(2,2)$ is open to the right and left simultaneously. See Figure \ref{f-Zzone}.\\

\begin{figure}[h]
\begin{center}
\includegraphics[width=5.0cm,height=4.0cm,angle=0]{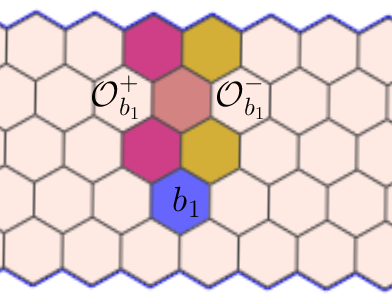}
\caption{The set $X_{b_1}^+$ divided into the subsets $\mathcal{O}_{b_1}^+$ and $\mathcal{O}_{b_1}^-$.}\label{f-Zzone}
\end{center}
\end{figure}

Note that if $b_2$ is of the form $(4,j)$, $j\geq3$, then $X_{b_1}^+\cap X_{b_2}^+=\emptyset$. The same is true for $j\leq -2$. Then we shall take $b_2$ of the form $(4,2j+1)$ or $(4,-2j)$, $j\geq4$, and analyze the choices of $r_1$ and $r_2$. For a description of the strategy $\mathcal{S}$ some tecnical  moves are necessary to expose. Most of them are virtual connections.


$\bullet$ {\bf Zig-Zag method.} Suppose that $b_2=(2,-2j)$, $j\geq1$ and $r_1,r_2\not\in\{(k,l):-2j-1\leq l\leq 2\}$. Then the next blue movements are showed in the following sequence. Red choices are necessary to avoid any early winning path of length $5$.
\begin{eqnarray*}
b_3(4,-2j)\rightarrow r_3(3,-2j)&\rightarrow& b_4(2,-2j+2)\rightarrow r_4(3,-2j+1)\rightarrow\\
&&\rightarrow b_5(4,-2j+2)\rightarrow r_5(3,-2j+2)\rightarrow b_6(2,-2j+4)...
\end{eqnarray*}
Continuing with these Zig-Zag connections, the point $b_{2(j+1)}=(2,1)$ yields a winning virtual connection at it through $(3,1)$ and $(3,-1)$. Clearly, any permissible winning path has length equal to $5$, see Figure \ref{f-Zigzag}.

An equivalent Zig-Zag method can be defined from the left to the right. Note also that $b_2$ can be of the form $(4,-2j)$.\\

\begin{figure}[h]
\begin{center}
\includegraphics[width=6.5cm,height=3.0cm,angle=0]{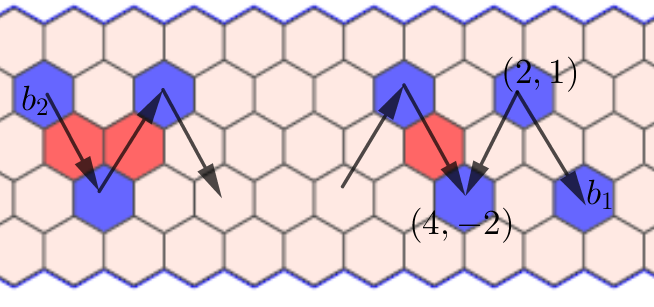}
\caption{The Zig-Zag method.}\label{f-Zigzag}
\end{center}
\end{figure}

The next method connects the two first movements $b_1$ and $b_2$ of Blue when $r_1$ and $r_2$ are in sets of different polarity meaning that the left Red response is open to the right and the right choice is open to the left. 
\medskip

\noindent  [{\bf Pattern OP (Opposite Polarity)}] Assume that $r_1$ is in $\mathcal{O}_{b_1}^+$, $b_2(4,i+2j)$ and $r_2\in\mathcal{O}^-_{b_2}$. Then Blue can creates the same above zig-zag performance and then $b_{2,i}$ forms two independent virtual connection guaranteeing a victory for Blue. Again the two virtual winning paths have length $5$.

In analogy, the above can be applied to the case $r_1\in \mathcal{O}_{b_1}^-$, $b_2(4,-2j)$, and $r_2\in\mathcal{O}^+_{b_2}$.

Now, to define $b_2$ we need to consider the cases when $r_1$ is in either $\mathcal{O}_{b_1}^+$ or $\mathcal{O}_{b_1}^-$.

{\bf Case I.} $r_1\in \mathcal{O}_{b_1}^+$. Then define blue $b_2=(4,2j+1)$, $j\geq4$. We divide this case into two subcases depending on the position of $r_2$ in $X_{b_2}^+$:\\

$\bullet$ $r_2$ is open to the left. The strategy $\mathcal{S}$ is defined according to Pattern OP.\\

\noindent [{\bf Pattern EZ (Extended Zig-Zag)}]

$\bullet$ $r_2$ is open to the right. 
If  $r_2=(2,2j+1)$ then it is also open to the left, and it was already considered above.

 Assume now that  $r_2=(3,2j+1)$. Since $r_1\in\mathcal{O}_{b_1}^+$, it is possible to apply the above zig-zag strategy up to $b_{2j+1}=(2,2j-1)$ and then $b_{2j+2}=(3,2j)$ implies $r_{2j+2}=(1,2j+1)$. \\
Now Blue can take $b_{2j+3}=(3,2j+2)$ and obligate the choice of Red to be $r_{2j+3}(1,2j+3)$.\\
But the cell $b_{2j+4}=(1,2j+2)$ creates two independent virtual connections. It is clear that the two possible winning path have length $5$. Note that this method works even for $j=1$. See Figure \ref{f-patternEZ}.

\begin{figure}[h]
\begin{center}
\includegraphics[width=11.5cm,height=3.0cm,angle=0]{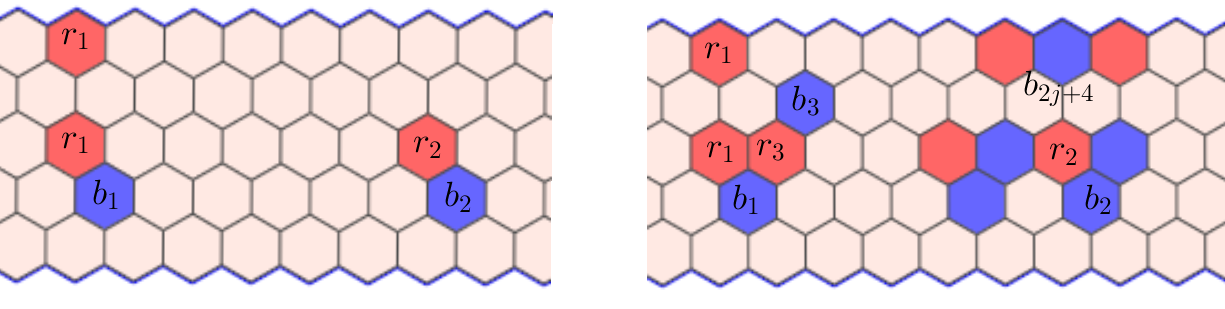}
\caption{Pattern EZ} \label{f-patternEZ}
\end{center}
\end{figure}

Now suppose that $r_2=(1,2j+2)$. Take $b_3=(4,3)$ and by the above configuration, $r_3=(1,4)$. By the same argument, we  define $b_4=(4,5)$ and then $r_4=(1,6)$. Also define $b_5=(4,7)$ and then $r_5=(1,8)$. But this situation falls into the following pattern.  
\medskip

\noindent [{\bf Pattern SB (Three-Step Barrier)}] Suppose that the first three movements of each player have been placed in the following sequence
$$
b1r_1(1,2)\rightarrow b_2(4,3)\rightarrow r_2(1,4) \rightarrow b_3(4,5)\rightarrow r_3(1,6).
$$ 
Blue can take $b_4(1,3)$ and then the next sequence is mandatory for Red for avoiding any winner path:\\
$$
b_4\rightarrow r_4(3,2)\rightarrow b_5(3,3)\rightarrow r_5(2,3)\rightarrow
$$
But now the cell $b_6(1,5)$ win the game, because Blue creates two independent virtual connections, one in the $b_2$ through $(2,4)$ and the other with the two independent virtual connections in $b_2$ and $b_3$ through $(2,6)$. Note that both possible winning path has length $5$. See Figure \ref{f-patternSB}.

\begin{figure}[h]
\begin{center}
\includegraphics[width=10.0cm,height=3.0cm,angle=0]{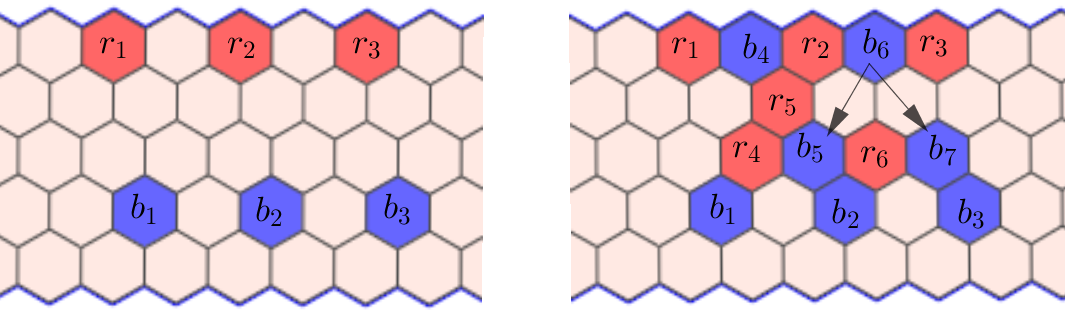}
\caption{Pattern SB}\label{f-patternSB}
\end{center}
\end{figure}

The definition of $\mathcal{S}$ in the first case is now complete.\hspace{6.5cm} \qquad$\blacksquare$
\medskip

Now we shall analyze the second case. In this conditions, the structure of the strategy $\mathcal{S}$ is reduced to the previous case. 

 {\bf Case II. $r_1\in\mathcal{O}^-_{b_1}$.} Then the second blue movement is in to the left of $b_1$ but in the second line. It is defined $b_2=(2,-2)$. We again break down the case into two subcases depending on the $r_2$ open possibilities. 

$\bullet$ If $r_2\in \widetilde{\mathcal{O}}_{b_2}^-:=\{(3,-3), (5,-4)\}$, then the following sequence is necessary
$$
b_1r_1b_2r_2\rightarrow b_3(4,-2)\rightarrow r_3(3,-2)\rightarrow b_4(2,1). 
$$
Then $b_4$ forms a winning virtual connection at $b_4$ passing through $b_1$ and $b_3$. It is clear that any winning path formed here has length $5$.\\

$\bullet$ If $r_2\in\widetilde{\mathcal{O}}_{b_2}^+:=\{(3,-2), (5,-3)\}$, then we apply the first part assuming $b_2$ as the first Blue movement. As notated there, any winning path has length $5$, see Figure \ref{f-caso2}.\\

In summary, Cases I and II together define a strategy $\mathcal{S}$ in the Hex board $H(5\times\infty)$ such that any winning path has length $5$. This implies that $\lambda(5\times\infty)=5,$ concluding the proof of Theorem~\ref{t-main2}. 

\begin{figure}[h]
\begin{center}
\includegraphics[width=4.0cm,height=3.0cm,angle=0]{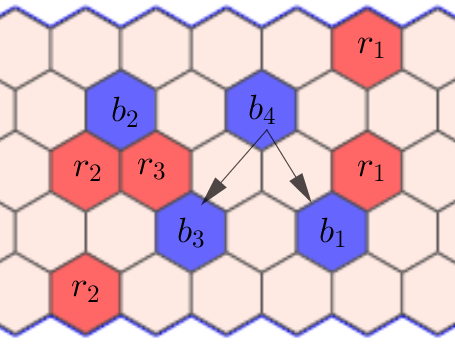}
\caption{Case II in the proof of Theorem \ref{t-main2}.}
\label{f-caso2}
\end{center}
\end{figure}
 \medskip 
  
In our especial case $n=5$, we showed that $\lambda(5)=7$ and $\lambda(5\times\infty)=5$. This says that $\lambda(n\times\infty)$ can be strictly less than $\lambda(n)$. At the same time, we obtained that $\delta(5)=7$ in the regular $5\times 5$ board. 
In the $H(5\times\infty)$, note that exactly as in the finite case, every winning path on the
$5\times\infty$ board must meet each of the $5$ rows of the board at least
once, so $\lambda(5\times\infty)\ge 5$. Since $\delta(5\times\infty) \geq \lambda(5\times\infty)$ we conclude
\[
\delta(5\times\infty)\ge 5.
\]
But $\delta(5\times\infty)$ can be not equal to $5$ because, in contrary, every stone played by Blue in $H(5\times\infty)$ would belong to a same board of size $5$, and this would happen only if the game never leaves a window of $5$ consecutive columns. Then this restricted game induces a winning strategy in $H(5)$ of length $5$ contradicting that $\lambda(5)=7$. This shows, in particular, that $5=\lambda(5\times\infty) < \delta(5\times\infty)\leq\delta(5)=\lambda(5)=7$. We do not know the exact value of $\delta(5\times\infty)$.

\section{Concluding remarks and open problems}\label{s-finalremarks}
This paper settles two conjectures of Campbell \cite{Ca04}
concerning the complexity parameters $\lambda(n)$ and $\delta(n)$
associated with optimal play in Hex.

We have determined the exact value $\delta(5)=7$ and $\lambda(5)=7$.
Together these results confirm that
\[
\lambda(5) = \delta(5) = 7 = 2 \cdot 5-3,
\]
extending the pattern $\lambda(n)=\delta(n)=2n-3$ previously known
for $n\in\{3,4\}$.
Several natural questions remain open.

\subsection*{The bound $\lambda(n)=2n-3$ for larger $n$}

The values computed so far are consistent with the conjecture
\[
\lambda(n) = \delta(n) = 2n-3 \quad \text{for all } n \ge 3.
\]
The upper bound $\lambda(n)\le 2n-3$ for $n\in\{3,4,5\}$
is established in Section~\ref{s-proof-teoA} via the centre strategy.
It would be interesting to verify whether the same strategy
achieves the bound for $n=6$ and $n=7$.

\subsection*{Exact values for $n=6$}

The most immediate open problem is to determine the exact values of
$\lambda(6)$ and $\delta(6)$.
The conjecture $\lambda(6)=\delta(6)=9$ is consistent with all
known data.
Establishing the upper bound $\lambda(6)\le 9$ via the centre
strategy requires verifying that the east and west blocking numbers
on the $6\times6$ board both exceed~$5$,
which would follow from an enumeration of routes of length $\le 4$
from the centre cell $(3,3)$.

\subsection*{Exact value of $\delta(5\times\infty)$}

Theorem $B$ gives the exact value of $\lambda(5\times\infty)$.  What is the exact value of $\delta(5\times \infty)$? Note that was already observed the inequalities $6\leq\delta(5\times\infty)\leq 7$.   

\subsection*{Equality of the parameters}

For all $n\le 5$, the two parameters coincide: $\lambda(n)=\delta(n)$.
It is an open problem whether this equality holds for all $n$,
or whether the two parameters eventually diverge.
A proof of $\lambda(n)=\delta(n)$ for all $n$ would imply that
the shortest winning path and the minimum winning set always have
the same size, meaning that every stone in an optimal winning path
is essential.  
This question, however, is specific to finite boards: on the infinite strip
the two parameters already diverge at $n=5$, since we showed that
$\lambda(5\times\infty) < \delta(5\times\infty)$.

\subsection*{Asymptotic behaviour}

If the conjecture $\lambda(n)=2n-3$ holds for all $n\ge 3$,
then $\lambda(n)/n\to 2$ as $n\to\infty$.
Even without this conjecture, it would be interesting to determine
the true asymptotic growth rate of $\lambda(n)$ and $\delta(n)$,
and in particular whether $\lambda(n)/n$ converges. The same may be asked for unlimited boards.



\begin{thebibliography}{10}
\bibitem{Ca04}
Garikai Campbell 
\newblock On optimal play in the game of Hex.
\newblock {\em Eletronic  Journal of Combinatorial  Number Theory,} 4, 2004.

\bibitem{ET76}
Shimon Even and Robert E. Tarjan.
\newblock A combinatorial problem which is complete in polynomial space.
\newblock {\em Journal of the ACM}, 23(4): 710--719, 197

\bibitem{Ga79}
\newblock David Gale. 
\newblock The game of Hex and the Brouwer fixed point theorem. 
\newblock {\em American Mathematical Monthly,} 86(10): 818 – 827, 1979

\bibitem{HAH09}
\newblock Henderson, P., Arneson, B., Hayward, R.B.:
\newblock Solving 8x8 Hex.
\newblock {\em Boutilier, C. (ed.)
IJCAI,} 505--510, 2009.


\bibitem{HR06}
\newblock Ryan B. Hayward, Jack van Rijswijck.
\newblock Hex and combinatorics.
\newblock {\em Discrete Mathematics,} 306: 2515--2528, 2006.


\bibitem{HBJKPR05}
Ryan Hayward, Yngvi Bj\"ornsson, Michael Johanson, Morgan Kan,
Nathan Po, and Jack van Rijswijck.
\newblock Solving $7\times7$ Hex with domination, fill-in, and virtual connections.
\newblock {\em Theoretical Computer Science}, 349:125--139, 2005.

\bibitem{Mi06}
\newblock Mishita, K., Sakurai, H., Noshita, K.
\newblock New proof techniques and their applications to winning strategies in Hex.
\newblock {\em Proceedings of 11th game programing workshop in Japan}, 136--142, 2006.

\bibitem{No04}
\newblock Noshita, K.
\newblock Union-connections and a simple readable winning way in $7\times 7$ Hex.
\newblock {\em Proceedings of 9th game programming workshop in Japan}, 72--79, 2004.

\bibitem{No05}
\newblock Noshita, K.
\newblock Union-connections and straightforward winning strategies in Hex.
\newblock {\em ICGA Journal} 28(1), 3--12, 2005.

\bibitem{Rei81}
\newblock Stefan Reisch.
\newblock Hex ist PSPACE-vollst\"andig.
\newblock {\em Acta Informatica}, 15: 167--191, 1981.

\bibitem{St07}
\newblock Walter Stromquis.
\newblock Winning paths in $N$-By-Infinity Hex.
\newblock {\em Integers: Eletronic Journal of Combinatorial Number Theory}, 7, 2007.

\bibitem{Ya01}
\newblock Yang, J., Liao, S., Pawlak, M.
\newblock A decomposition method for finding solution in
game Hex $7\times 7$.
\newblock {\em Ning, C.T. (ed.) ADCOG, City University of Hong Kong,} 96--111, 2001.
\end{thebibliography}
\end{document}